\newtheorem{theorem}{Theorem}[section]
\newtheorem{lemma}[theorem]{Lemma}
\newtheorem{proposition}[theorem]{Proposition}
\newtheorem{corollary}[theorem]{Corollary}
\theoremstyle{definition}
\newtheorem{definition}[theorem]{Definition}
\newtheorem{remark}[theorem]{Remark}
\newcommand{\Tr}{\text{Tr}}
\newcommand{\id}{\text{id}}
\newcommand{\Hom}{\text{Hom}}
\newcommand{\Rep}{\text{Rep}}
\newcommand{\g}{\mathfrak{g}}
\newcommand{\Dim}{{\rm Dim}}
\newcommand{\kk}{{\bold k}}
\newcommand{\C}{\mathcal{C}}
\newcommand{\ben}{\begin{enumerate}}
\newcommand{\een}{\end{enumerate}}
\newcommand{\be}{{\bf 1}}
\theoremstyle{plain}
\newtheorem*{sol}{Solution}
\theoremstyle{definition}
\theoremstyle{remark}
\newcommand{\solu}[1]{\begin{sol}{\bf (\ref{#1})}}
\def\g{\mathfrak{g}}
\def\C{\mathcal{C}}
\def\Q{\mathbb{Q}}
\def\Hom{\mathrm{Hom}}
\def\Dim{{\rm Dim}}
\def\B{\mathcal{B}}
\def\A{\mathcal{A}}
\def\Vec{\mathrm{Vec}}
\def\sVec{\mathrm{sVec}}
\def\Ver{\mathrm{Ver}}
\def\Rep{\mathop{\mathrm{Rep}}\nolimits}
\def\SFPdim{\mathop{\mathrm{SFPdim}}\nolimits}
\def\FPdim{\mathop{\mathrm{FPdim}}\nolimits}
\def\sdim{\mathop{\mathrm{sdim}}\nolimits}
\begin{document}

\title{Computations in symmetric fusion categories in characteristic $p$}

\author{Pavel Etingof, Victor Ostrik, Siddharth Venkatesh}

\address{Etingof: Department of Mathematics, Massachusetts Institute of Technology, Cambridge, MA 02139,
USA}

\email{etingof@math.mit.edu}

\address{Ostrik: Department of Mathematics
University of Oregon
Eugene, OR 97403, USA}

\email{vostrik@math.uoregon.edu}

\address{Venkatesh: Department of Mathematics, Massachusetts Institute of Technology, Cambridge, MA 02139, USA}

\email{sidnv@mit.edu}

\begin{abstract} 
We study properties of symmetric fusion categories in characteristic $p$. In particular, we introduce the notion of a super Frobenius-Perron dimension of an object $X$ of such a category, and derive an explicit formula for the Verlinde fiber functor 
$F(X)$ of $X$ (defined by the second author) in terms of the usual and super Frobenius-Perron dimensions of $X$. 
We also compute the decomposition of symmetric powers of objects of the Verlinde category, generalizing a classical formula of Cayley and Sylvester for invariants of binary forms. Finally, we show that the Verlinde fiber functor is unique, and classify braided fusion categories of rank two and triangular semisimple Hopf algebras in any characteristic. 
\end{abstract}

\maketitle

\section{Introduction}
Let $\kk$ be an algebraically closed field of characteristic $p>0$. 
 Let $\C$ be a symmetric fusion category over $\kk$. 
Then, according to the main result of \cite{O}, $\C$ admits a symmetric 
tensor functor $F: \C\to {\rm Ver}_p$ into the Verlinde category ${\rm Ver}_p$
(the quotient of ${\rm Rep}_\kk(\Bbb Z/p\Bbb Z)$ by negligible morphisms); we prove that this functor is unique. 
We derive an explicit formula for the decomposition of $F(X)$ into simple objects for each $X\in \C$; 
it turns out that this decomposition is completely determined by just two parameters
--- the Frobenius-Perron dimension $\FPdim(X)$ and the super Frobenius-Perron dimension $\SFPdim(X)$
which we introduce in this paper. We use this formula to find the decomposition of symmetric powers 
of objects in ${\rm Ver}_p$, and in particular find the invariants in them --- the ``fusion'' analog of the classical formula for polynomial invariants of binary forms. We also relate the super Frobenius-Perron dimension 
to the second Adams operation and to the $p$-adic dimension introduced in \cite{EHO}, and classify symmetric categorifications of fusion rings of rank 2. Finally, we classify triangular semisimple Hopf algebras in an arbitrary characteristic, generalizing the result of \cite{EG} for characteristic zero, and classify braided fusion categories of rank two. 

The paper is organized as follows. Section 2 contains preliminaries. In Section 3 we prove the uniqueness of the Verlinde fiber functor. In Section 4, we define the super Frobenius-Perron dimension of an object of a symmetric fusion category, and 
give a formula for this dimension in terms of the second Adams operation; here we also classify braided fusion categories of rank two. In Section 5, we prove a decomposition formula 
for the Verlinde fiber functor of an object $X$ of a symmetric fusion category in terms of the ordinary and super Frobenius-Peron dimensions of $X$, and use this formula to compute the transcendence degrees of the symmetric and exterior algebra of $X$. In Section 6, we use the formula of Section 5 to find the decomposition of symmetric powers 
of objects in the Verlinde category, and in particular give a formula for their invariants. In Section 7, we compute the $p$-adic dimensions of an object in a symmetric fusion category. Finally, in Section 8, we classify semisimple triangular Hopf algebras 
in any characteristic. 

{\bf Acknowledgements.} The work of P.E. was partially supported by the NSF grant DMS-1502244.

\section{Preliminaries}

Throughout the paper, $\kk$ will denote an algebraically closed field. 
Unless specified otherwise, we will assume that the characteristic of $\kk$ is
$p>0$.  

By a symmetric tensor category over $\kk$ (of any characteristic) we will mean an artinian rigid symmetric monoidal category
in which the tensor product is compatible with the additive structure, and ${\rm End}(\bold 1)=\kk$ (see \cite{EGNO}, Definitions 4.1.1, 8.1.2). By a symmetric fusion category we mean a semisimple symmetric tensor category 
with finitely many simple objects. 
 
\subsection{Verlinde categories}  
Recall that the {\it Verlinde category} ${\rm Ver}_{p}$ is a symmetric fusion category over $\kk$
obtained as the quotient of $\Rep_{\kk}({\Bbb Z}/p{\Bbb Z})$ by the tensor ideal of 
negligible morphisms, i.e. morphisms $f: X\to Y$ such that for any $g: Y\to X$ one has ${\rm Tr}(fg)=0$
(see \cite{O} for details). This category has $p-1$ simple objects, $\mathbf{1} = L_{1}, \ldots, L_{p-1}$, such that
$$
L_{r} \otimes L_{s} \cong \sum_{i=1}^{\min(r, s, p-r, p-s)} L_{|r -s| + 2i - 1}
$$
(the Verlinde fusion rules). 

\begin{definition} 
We define ${\rm Ver}_{p}^{+}$ to be the abelian subcategory of ${\rm Ver}_p$ generated by $L_{i}$ for $i$ odd, 
and define ${\rm Ver}_{p}^{-}$ to be the abelian subcategory of ${\rm Ver}_p$ generated by $L_{i}$ for $i$ even.
\end{definition}

By the Verlinde fusion rules, ${\rm Ver}_{p}^{+}$ is a fusion subcategory of ${\rm Ver}_{p}$, 
and tensoring with $L_{p-1}$ gives an equivalence of abelian categories ${\rm Ver}_{p}^{+} \rightarrow {\rm Ver}_{p}^{-}$ as long as $p > 2$. Since for $p>2$ the symmetric fusion subcategory generated by $L_{1}$ and $L_{p-1}$ is ${\rm sVec}$, the category of supervector spaces over $\kk$, we see that 
$$
{\rm Ver}_{p} = {\rm Ver}_{p}^{+} \oplus {\rm Ver}_{p}^{-} \cong {\rm Ver}_{p}^{+} \boxtimes {\rm sVec},\ p>2
$$
(see \cite[3.3]{O}).

Recall that for $r \in \mathbb{Z}$ and $z \in \mathbb{C}$, 
$$
[r]_{z} = \frac{z^{r} - z^{-r}}{z - z^{-1}}.
$$
Let $q = e^{\pi i/p}$ be a primitive $2p$-th root of unity, and let $\mathrm{Gr}({\rm Ver}_{p})$ denote the Grothendieck ring of ${\rm Ver}_{p}$. 
The following lemma is standard. 

\begin{lemma} \label{char:Ver}
$\mathrm{Gr}({\rm Ver}_{p})$ has exactly $p-1$ distinct characters $\chi_{1}, \ldots, \chi_{p-1}$, where 
$$
\chi_{j}(L_{r}) = [r]_{q^{j}}.
$$
\end{lemma}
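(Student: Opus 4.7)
The plan is to identify $\mathrm{Gr}(\mathrm{Ver}_p) \otimes_{\mathbb{Z}} \mathbb{C}$ with an explicit quotient of a polynomial ring and read off its characters. Define polynomials $P_r(x) \in \mathbb{Z}[x]$ by the recursion $P_1 = 1$, $P_2 = x$, $P_{r+1} = xP_r - P_{r-1}$. A short induction from $[r+1]_z = (z + z^{-1})[r]_z - [r-1]_z$ yields the key identity $P_r(z + z^{-1}) = [r]_z$ for all $r \geq 1$.

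First I would use the special case $s = 2$ of the Verlinde fusion rule, namely $L_2 L_s = L_{s-1} + L_{s+1}$ for $1 < s < p-1$, together with the boundary cases $L_2 L_1 = L_2$ and $L_2 L_{p-1} = L_{p-2}$, to prove by induction on $r$ that $L_r = P_r(L_2)$ for $1 \leq r \leq p-1$ and that $P_p(L_2) = 0$ in $\mathrm{Gr}(\mathrm{Ver}_p)$. Since $\{L_r\}_{r=1}^{p-1}$ is a $\mathbb{Z}$-basis and $P_1, \ldots, P_{p-1}$ have strictly increasing degrees $0, 1, \ldots, p-2$, the assignment $L_2 \mapsto x$ then induces a ring isomorphism $\mathrm{Gr}(\mathrm{Ver}_p) \cong \mathbb{Z}[x]/(P_p(x))$.

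Next I would locate the roots of $P_p$ in $\mathbb{C}$. The identity above specializes to $P_p(z + z^{-1}) = (z^p - z^{-p})/(z - z^{-1})$, which vanishes precisely when $z^{2p} = 1$ and $z^2 \neq 1$. Pairing $z$ with $z^{-1}$, the substitutions $z = q^j$ for $j = 1, \ldots, p-1$ produce $p-1$ distinct real roots $q^j + q^{-j} = 2\cos(j\pi/p)$. Thus $\mathbb{C}[x]/(P_p(x)) \cong \mathbb{C}^{p-1}$ splits as a product of copies of $\mathbb{C}$, and its characters are exactly the $p-1$ evaluations $x \mapsto q^j + q^{-j}$. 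Pulling these back through $L_r = P_r(L_2)$ gives $\chi_j(L_r) = P_r(q^j + q^{-j}) = [r]_{q^j}$, and distinctness of the roots forces distinctness of the $\chi_j$.

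The only point with any content is the truncation step $P_p(L_2) = 0$: the naive recursion would have produced an $L_p$ term, but the boundary Verlinde rule $L_2 L_{p-1} = L_{p-2}$ (with no $L_p$ term) enforces vanishing. This is where the specifics of $\mathrm{Ver}_p$ enter; everything else is routine linear algebra over a separable quotient of $\mathbb{C}[x]$.
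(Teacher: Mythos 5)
Your proof is correct, but it takes a more explicit route than the paper. The paper's own argument is a two-line appeal to standard facts: it notes that each $\chi_j$ is a character (``well known and easy to show''), and then deduces that there are no others because $\mathrm{Gr}({\rm Ver}_p)\otimes\mathbb{C}$ has dimension $p-1$ while distinct characters are linearly independent. You instead exhibit a presentation $\mathrm{Gr}({\rm Ver}_p)\cong\mathbb{Z}[x]/(P_p(x))$ via the Chebyshev-type polynomials $P_r$, using the $s=2$ fusion rules to get $L_r=P_r(L_2)$ and the boundary rule $L_2L_{p-1}=L_{p-2}$ to get the truncation $P_p(L_2)=0$, and then read off all characters at once from the $p-1$ simple roots $q^j+q^{-j}$ of $P_p$; your kernel computation (monic $P_p$, division with remainder, and $\mathbb{Z}$-independence of the $L_r$) is the right way to see the kernel is exactly $(P_p)$. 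What your approach buys is self-containedness: it simultaneously proves the ``well known'' multiplicativity of the $\chi_j$ and their exhaustiveness, with semisimplicity of $\mathbb{C}[x]/(P_p)$ replacing the appeal to linear independence of characters; what the paper's approach buys is brevity, since it only needs the dimension count. One small caveat: your argument uses $L_2$ and so tacitly assumes $p\ge 3$; for $p=2$ the ring is $\mathbb{Z}$ and the statement is trivial, so this is harmless, but it is worth a sentence.
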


\begin{proof} It is well known (and easy to show) that $\chi_{j}$ is a character. That these are all the distinct characters 
follows from the facts that $\mathrm{Gr}({\rm Ver}_{p}) \otimes \mathbb{C}$ has dimension $p-1$ and that distinct characters 
are linearly independent.
\end{proof}

Note that the character $\chi_1$ takes only positive values, and therefore coincides with 
the Frobenius-Perron dimension ${\rm FPdim}$. 

Let $K=\Q(q)\cap \Bbb R$. Then $K$ is a Galois extension of $\Q$, with
$$
{\rm Gal}(K/\Q) = \{g_{0}, \ldots, g_{\frac{p-3}{2}}\} \cong {\Bbb Z}/\frac{p-1}{2}{\Bbb Z}
$$
for $p>2$, where $g_{s}$ is defined on $\Bbb Q(q)$ by sending $q$ to $q^{2s+1}$; 
for $p=2$, we have $K=\Bbb Q$. 

\begin{proposition} \label{Galois} The characters $\chi_j$ of $\mathrm{Gr}({\rm Ver}_{p})$ land in $K$. Moreover, for $p>2$ and $0\le s\le \frac{p-3}{2}$ we have 
$$
\chi_{2s+1} = g_{s} \circ \chi_1, \;\; \chi_{p-2s-1} = g_{s}\circ \chi_{p-1}.
$$ 
\end{proposition}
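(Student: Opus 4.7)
The plan is to deduce everything by direct calculation from the explicit formula $\chi_j(L_r) = [r]_{q^j}$ together with the defining relation $q^p = -1$; no heavy machinery is needed beyond standard cyclotomic arithmetic.

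First, I would show every $\chi_j$ lands in $K$. Since $[r]_{q^j} = (q^{jr}-q^{-jr})/(q^j-q^{-j})$ is manifestly invariant under $q^j \mapsto q^{-j}$, and complex conjugation on $\mathbb{Q}(q)$ is exactly the automorphism $q \mapsto q^{-1}$, the value $\chi_j(L_r)$ is fixed by complex conjugation, hence real, hence in $K = \mathbb{Q}(q)\cap \mathbb{R}$.

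The identity $\chi_{2s+1} = g_s \circ \chi_1$ is then essentially by definition: since $g_s$ is a ring automorphism sending $q$ to $q^{2s+1}$,
$$
g_s([r]_q) = \frac{q^{r(2s+1)} - q^{-r(2s+1)}}{q^{2s+1} - q^{-(2s+1)}} = [r]_{q^{2s+1}} = \chi_{2s+1}(L_r).
$$

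For the remaining identity, the key intermediate observation is $\chi_{p-1}(L_r) = (-1)^{r+1}\chi_1(L_r)$. Using $q^p=-1$ one has $q^{p-1} = -q^{-1}$ and $q^{-(p-1)} = -q$, so a one-line sign computation yields
$$
[r]_{q^{p-1}} = \frac{(-1)^r q^{-r} - (-1)^r q^r}{-q^{-1}+q} = (-1)^{r+1}[r]_q.
$$
The identical manipulation with $q$ replaced by the primitive $2p$-th root of unity $q^{2s+1}$ (primitive because $\gcd(2s+1, 2p)=1$ in the stated range) gives $\chi_{p-(2s+1)}(L_r) = (-1)^{r+1}\chi_{2s+1}(L_r)$. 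Combining with the first identity and the $\mathbb{Q}$-linearity of $g_s$,
$$
g_s(\chi_{p-1}(L_r)) = (-1)^{r+1} g_s(\chi_1(L_r)) = (-1)^{r+1}\chi_{2s+1}(L_r) = \chi_{p-2s-1}(L_r).
$$

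The whole argument is routine; the only spot that requires care is keeping the signs straight in the $q^{p-1}=-q^{-1}$ reduction, but this is genuinely a two-line calculation rather than a substantive obstacle.
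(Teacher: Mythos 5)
Your proof is correct, and it is essentially the paper's argument: the paper also deduces the first claim from the values $[r]_{q^j}$ lying in $\Bbb Q(q)$ and being real (invariant under $q\mapsto q^{-1}$), and dismisses the second claim as an obvious direct computation with $q^p=-1$, which is exactly the calculation you carry out in detail.
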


\begin{proof} 
It is clear from Lemma \ref{char:Ver} that all values of $\chi_j$ lie in $\Bbb Q(q)$ and are real, so the first statement follows. 
The second statement is obvious. 
\end{proof} 

Recall that for any object $X$ of a symmetric tensor category, we can define its symmetric powers 
$S^iX$ and exterior powers $\wedge^iX$ (\cite{EHO}, 2.1). 

\begin{proposition}\label{prop31} For every $r>1$, we have $S^iL_r=0$ if $i+r>p$, 
and for every $r<p-1$, we have $\wedge^iL_r=0$ if $i>r$. 
\end{proposition}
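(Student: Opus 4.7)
My approach would be character-theoretic. By Lemma~\ref{char:Ver} and the semisimplicity of ${\rm Ver}_p$, an object $X$ is zero if and only if $\chi_j(X)=0$ for every $j=1,\ldots,p-1$, since these characters are linearly independent and separate classes in $\mathrm{Gr}({\rm Ver}_p)\otimes_{\mathbb{Z}}\mathbb{C}$. Interpreting $\chi_j(L_r)=[r]_{q^j}$ as the sum of the $r$ ``quantum eigenvalues''
\[
q^{j(r-1)},\ q^{j(r-3)},\ \ldots,\ q^{-j(r-1)},
\]
and using the $\lambda$-ring structure on $\mathrm{Gr}({\rm Ver}_p)$ together with the multiplicativity of $\chi_j$, I would establish the generating-function identities
\[
\sum_{i\ge 0}\chi_j(\wedge^i L_r)\,t^i = \prod_{k=0}^{r-1}\!\bigl(1+q^{j(r-1-2k)}t\bigr),\qquad
\sum_{i\ge 0}\chi_j(S^i L_r)\,t^i = \prod_{k=0}^{r-1}\!\frac{1}{1-q^{j(r-1-2k)}t}.
\]

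For the exterior claim, the coefficient of $t^i$ on the right is the elementary symmetric polynomial $e_i$ in $r$ variables, which vanishes identically for $i>r$; hence $\wedge^i L_r=0$. For the symmetric claim, the coefficient is the complete symmetric polynomial $h_i$ of the same $r$ eigenvalues, which equals (up to the unit $q^{ji(r-1)}$) the Gaussian binomial $\binom{r+i-1}{i}_{q^{-2j}}$. When $r+i>p$ with $r>1$, the numerator $\prod_{k=1}^{r+i-1}(1-q^{2jk})$ acquires a zero at the index $k=p$ (since $q^{2jp}=1$ and $\gcd(j,p)=1$), while for $r\le p-1$ the factor $\prod_{k=1}^{r-1}(1-q^{2jk})$ in the denominator has no such zero; for $i\ge p$ I would compare orders of vanishing in numerator versus denominator to conclude that the quotient still vanishes. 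By Proposition~\ref{Galois} it suffices to verify this for $\chi_1$ and $\chi_{p-1}$, and both reduce to the same elementary computation (up to signs of eigenvalues), so $\chi_j(S^i L_r)=0$ for all $j$, giving $S^i L_r=0$.

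\textbf{Main obstacle.} The principal difficulty is justifying the generating-function identities above in characteristic $p$: the classical derivation goes through the $\lambda$-ring / Schur--Weyl formalism, which uses semisimplicity of the group algebra $\kk[S_i]$ that fails for $i\ge p$. One way around this is to lift the computation to the semisimple subquotient of the tilting module category of $U_q(\mathfrak{sl}_2)$ at $q=e^{\pi i/p}$ (which recovers ${\rm Ver}_p$), where the identities hold by classical $\mathfrak{sl}_2$-representation theory applied to the $r$-dimensional Weyl module. The exclusion $r=p-1$ in the exterior statement is the expected boundary: the object $L_{p-1}$ is the generator of an ${\rm sVec}$-like fusion subcategory of ${\rm Ver}_p$ and behaves as a super-odd line, so the naive eigenvalue-based character formula for $\wedge^i L_{p-1}$ genuinely fails there.
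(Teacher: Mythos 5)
Your approach breaks at its central step: the generating-function identities you propose to ``establish'' are false in ${\rm Ver}_p$ precisely in the degrees the proposition is about. In characteristic $p$ the operations $S^i,\wedge^i$ do not make $\mathrm{Gr}$ of a symmetric tensor category a $\lambda$-ring once $i\ge p$: $\kk[S_i]$ is not semisimple, $S^iX$ and $\wedge^iX$ are no longer direct summands of $X^{\ot i}$, and the ``sum over quantum eigenvalues'' rule fails. The model case is the odd line $X$ in $\sVec$ (which is exactly how $L_{p-1}$ sits in ${\rm Ver}_p$): $S^iX=0$ for $i\ge 2$, whereas the eigenvalue $-1$ would predict $\chi(S^iX)=(-1)^i$. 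You notice this failure for $\wedge^iL_{p-1}$ and call it a boundary case, but the same mechanism kills the symmetric-power identity in the range you need. Concretely, take $r=2$, $i=p$: the coefficient of $t^{p}$ in $((1-q^{j}t)(1-q^{-j}t))^{-1}$ is $h_p(q^{j},q^{-j})=\sum_{k=0}^{p}q^{j(p-2k)}=(-1)^j\ne 0$, while $S^{p}L_2=0$ (this is the assertion being proved, and it does hold, by \cite{Ven}); so the identity you would need is inconsistent with the statement itself. Equivalently, on the combinatorial side, your claim that the Gaussian binomial $\binom{r+i-1}{i}$ at a primitive $p$-th root of unity vanishes whenever $i+r>p$ is false for $i\ge p$: $\binom{p+1}{p}_t=1+t+\cdots+t^{p}$ evaluates to $1$ there, so the ``compare orders of vanishing'' step cannot be completed. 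The proposed rescue via tilting modules of $U_q(\sl_2)$ at $e^{\pi i/p}$ also does not work: that characteristic-zero fusion category is braided but not symmetric, so it has no symmetric or exterior powers compatible with those of ${\rm Ver}_p$; and in the realization actually used in \cite[4.3.2]{O} (the semisimplification of tilting modules for $SL(2)$ over $\kk$), semisimplification does not commute with $S^i$ and $\wedge^i$ for $i\ge p$, for the same reason as above.

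For comparison, the paper's proof involves no character computation: the vanishing $S^iL_r=0$ for $i+r>p$, $r>1$, is quoted from the proof of Proposition 3.1 of \cite{Ven}, and the exterior statement is deduced from it by twisting with the odd invertible object, $\wedge^iL_r=L_{p-1}^{\ot i}\ot S^i(L_{p-1}\ot L_r)=L_{p-1}^{\ot i}\ot S^iL_{p-r}$, using that $L_{p-1}$ generates a copy of $\sVec$ inside ${\rm Ver}_p$. The truncated character formula you are implicitly relying on, $\FPdim(S^iL_m)=\binom{i+m-1}{m-1}_q$, does appear in the paper (Proposition \ref{bino}), but only in the range $i\le p-m$ complementary to the one needed here, and it is proved there via the fiber functor from ${\rm Ver}_p(SL_m)$ rather than by eigenvalue formalism; it cannot serve as the engine of a character-theoretic proof of the vanishing.
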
 

\begin{proof} The first statement is proved in the proof of Proposition 3.1 of \cite{Ven}. 
The second statement follows from the first one using the equality
$$
\wedge^iL_r=L_{p-1}^{\otimes i}\otimes S^i(L_{p-1}\otimes L_r)=L_{p-1}^{\otimes i}\otimes S^iL_{p-r},
$$
which holds since $L_{p-1}$ generates a copy of ${\rm sVec}$ inside ${\rm Ver}_p$. 
\end{proof} 

\subsection{The Verlinde fiber functor}\label{vff} 

Let $\C$ be any symmetric fusion category over $\kk$. 
The following theorem is the main result of \cite{O}:

\begin{theorem}\label{fiber} (\cite{O}) There exists a symmetric tensor functor \linebreak $F: \C\to {\rm Ver}_p$.  
\end{theorem}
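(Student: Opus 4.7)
The plan is to follow the strategy of Ostrik in \cite{O}, which adapts Deligne's super-Tannakian theorem to positive characteristic. The underlying principle is that in characteristic $p$ the failure of semisimplicity of the group algebra $\kk[S_p]$ introduces ``quantum-like'' categorical dimensions beyond the $\pm 1$ values afforded by ${\rm sVec}$, and these are captured precisely by the simples $L_1,\ldots,L_{p-1}$ of ${\rm Ver}_p$ via Lemma~\ref{char:Ver}.

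First, I would reduce to the case where $\C$ is tensor-generated by a single object $X$, and use the symmetric braiding to produce, for every $n$, an algebra map $\kk[S_n]\to \End(X^{\otimes n})$ compatible with the tower structure in $n$. Restricting to a cyclic subgroup $\ZZ/p\ZZ \subset S_p$ generated by a $p$-cycle, one obtains a decomposition of $X^{\otimes p}$ into weight pieces and hence a candidate functor $\C\to \Rep_\kk(\ZZ/p\ZZ)$.

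The delicate point is that this candidate respects tensor structure and symmetry only up to morphisms of vanishing categorical trace; but these are exactly the negligible morphisms quotiented out in passing from $\Rep_\kk(\ZZ/p\ZZ)$ to ${\rm Ver}_p$. After composing with this quotient functor one obtains a symmetric tensor functor $F:\C\to {\rm Ver}_p$, at least on the generator $X$ and its tensor powers, which by the tensor-generation assumption suffices.

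The hardest part will be showing $F$ is exact and faithful on simples, i.e., does not annihilate a nonzero object of $\C$. This relies on the finiteness of $\Irr(\C)$, the semisimplicity of $\C$, and a compatibility of Frobenius--Perron dimensions with the Verlinde fusion rules. The structure of tilting modules for $\mathrm{SL}_2$ in characteristic $p$, which underlies the definition of ${\rm Ver}_p$, together with Proposition~\ref{prop31} bounding symmetric and exterior powers of the $L_r$, provide the crucial constraints ensuring that the cyclic $\ZZ/p\ZZ$-construction descends cleanly to ${\rm Ver}_p$ rather than producing a larger ambient category.
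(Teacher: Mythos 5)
There is a genuine gap here. Your starting point (the $\ZZ/p\ZZ$-action on $X^{\otimes p}$ coming from the symmetric braiding) is indeed the seed of Ostrik's construction, but the way you develop it does not work. First, there is no ``decomposition of $X^{\otimes p}$ into weight pieces'': in characteristic $p$ the group $\ZZ/p\ZZ$ has only the trivial character, and $\Rep_\kk(\ZZ/p\ZZ)$ is non-semisimple — its indecomposable Jordan blocks are precisely what produce the simples $L_1,\dots,L_{p-1}$ after killing negligibles. More importantly, $X^{\otimes p}$ with its cyclic action is an object of $\C$ with extra structure, not a $\kk$-vector space, so this construction does not yield a functor $\C\to\Rep_\kk(\ZZ/p\ZZ)$ at all; what it yields (after quotienting by negligibles) is the Frobenius functor ${\rm Fr}:\C\to\C^{(1)}\boxtimes{\rm Ver}_p$, whose target still carries the Frobenius twist of $\C$. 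The entire difficulty of the theorem is in getting rid of that $\C^{(1)}$-factor, and your proposal contains no mechanism for doing so: the actual argument iterates ${\rm Fr}$ to ${\rm Fr}^n:\C\to\C^{(n)}\boxtimes{\rm Ver}_p$, shows that for large $n$ the image lies in $\mathcal{D}\boxtimes{\rm Ver}_p$ with $\mathcal{D}\subset\C^{(n)}$ a nondegenerate fusion category, lifts $\mathcal{D}$ to characteristic zero, invokes Deligne's theorem there to produce a super fiber functor $\Phi:\mathcal{D}\to{\rm sVec}$, and only then composes with $\otimes:{\rm sVec}\boxtimes{\rm Ver}_p\to{\rm Ver}_p$. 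Invoking ``negligible morphisms'' cannot substitute for this lifting-plus-Deligne step; the failure of the naive construction is not a matter of discrepancies of vanishing trace.

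A secondary point: the step you single out as hardest — that $F$ is exact and faithful — is not an issue at all. Any tensor functor between tensor categories over $\kk$ with $\End(\be)=\kk$ is faithful, and exactness is automatic here by semisimplicity of $\C$. The real work lies exactly where your sketch is silent: controlling the iterated Frobenius twists and importing Deligne's characteristic-zero theorem via lifting theory.
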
   

We also have the following result, proved in Section \ref{uniqueness}.

\begin{theorem} \label{ufiber}
 A symmetric tensor functor $F: \C\to {\rm Ver}_p$ is unique up to a non-unique isomorphism of tensor functors.
\end{theorem}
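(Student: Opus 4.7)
The approach I would take is a Tannakian torsor argument. Given two symmetric tensor functors $F_1, F_2 \colon \C \to \Ver_p$, I define the affine scheme $\underline{\mathrm{Isom}}^{\otimes}(F_1, F_2)$ whose functor of points assigns to each commutative $\kk$-algebra $R$ the set of tensor isomorphisms $F_1 \otimes_{\kk} R \to F_2 \otimes_{\kk} R$. This is naturally a (left) torsor under the affine group scheme $G := \underline{\mathrm{Aut}}^{\otimes}(F_1)$, in the Tannakian sense relative to $\Ver_p$. Producing an isomorphism $F_1 \cong F_2$ of tensor functors amounts to exhibiting a $\kk$-point of this torsor; non-uniqueness of such an isomorphism is controlled by $G(\kk)$, which is generally nontrivial, which accounts for the ``non-unique'' clause in the statement.

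The heart of the argument is then to show that every $G$-torsor over $\mathrm{Spec}(\kk)$ is trivial. I would first analyze $G$ as an affine group scheme in $\Ver_p$: since $\C$ is a fusion category (finitely many simples, semisimple), $G$ should be ``essentially finite'' in an appropriate sense, and its classical quotient $G_0 = G(\be)$ should be reductive. Because $\kk$ is algebraically closed, standard results then give $H^1(\kk, G_0) = 0$. One then promotes this classical vanishing to $\Ver_p$-enriched torsor triviality via a Hochschild--Serre-type argument, comparing torsors of $G$ in $\Ver_p$ with those of $G_0$.

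The main obstacle I anticipate is precisely this cohomological lift from classical torsors to $\Ver_p$-enriched torsors, since torsors under affine group schemes living in $\Ver_p$ are not part of the standard toolkit and require careful foundational setup (the theory of such group schemes having been developed largely in the wake of Ostrik's paper \cite{O}). An alternative, more concrete route would be to work directly with the commutative algebra $A_i := F_i^R(\be) \in \C$, where $F_i^R$ denotes the right adjoint (which exists by semisimplicity on both sides). Each $F_i$ is determined up to tensor isomorphism by the isomorphism class of $A_i$ in $\C$, so it suffices to classify such algebras --- showing $A_1 \cong A_2$ --- and then lift an algebra isomorphism to a tensor isomorphism of functors. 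Either route ultimately requires a careful identification of $\C$ with representations of some fundamental object over $\Ver_p$ and a verification that over algebraically closed $\kk$ the relevant moduli collapses to a single isomorphism class.
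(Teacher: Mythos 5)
Your second, ``more concrete'' route is in fact the skeleton of the actual argument, but both of the steps you wave at are exactly where the real work lies, and neither is supplied. First, the assertion that ``each $F_i$ is determined up to tensor isomorphism by the isomorphism class of $A_i$'' is not automatic: what the adjunction gives you is a factorization $F_i\cong \tilde F_i\circ F_{A_i}$, where $F_{A_i}=?\otimes A_i:\C\to\C_{A_i}$ and $\tilde F_i:\C_{A_i}\to {\rm Ver}_p$ is a fully faithful tensor embedding of the image of $F_i$. Even after you know $A_1\cong A_2$, the two embeddings $\tilde F_1,\tilde F_2$ of $\C_A$ into ${\rm Ver}_p$ could a priori differ; to rule this out one needs that any two injective tensor functors into ${\rm Ver}_p$ are isomorphic, which rests on two nontrivial inputs: the classification of fusion subcategories of ${\rm Ver}_p$ (to see the images coincide) and the fact that ${\rm Ver}_p$ has no nontrivial tensor autoequivalences, proved via the universal property of the tilting category for $SL(2)$ (Temperley--Lieb). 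Second, you give no mechanism for proving $A_1\cong A_2$. The paper's mechanism is a concrete substitute for your ``$\kk$-point of the torsor'': every unital commutative algebra in ${\rm Ver}_p$ admits an algebra homomorphism to $\be$. This is proved by showing that if $\Hom(\be,X)=0$ then $S^nX=0$ for large $n$ (from the vanishing $S^iL_r=0$, $i+r>p$), so the augmentation ideal is nilpotent, and then applying the ordinary Nullstellensatz to the quotient lying in ${\rm Vec}$. Applying this to $F_2(A_1)$ and using that each $A_i$ has no nontrivial ideals (it is a simple module over itself, by the module-category equivalence) yields injections $A_1\hookrightarrow A_2$ and $A_2\hookrightarrow A_1$, hence $A_1\cong A_2$. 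Your closing sentence (``the relevant moduli collapses to a single isomorphism class'') restates the problem rather than solving it.

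Your primary Tannakian route has the same gap in a different guise, and you correctly identify but do not fill it: the torsor $\underline{\mathrm{Isom}}^{\otimes}(F_1,F_2)$ cannot be probed by ordinary commutative $\kk$-algebras $R$, since the fundamental group(oid) here lives in ${\rm Ver}_p$; the object playing the role of its coordinate algebra is a commutative algebra in ${\rm Ver}_p$ (essentially $F_2(A_1)$), and there is no off-the-shelf $H^1$-vanishing or Hochschild--Serre machinery for torsors under affine group schemes internal to ${\rm Ver}_p$. The content one actually needs at that point is precisely the algebra statement above (existence of a homomorphism to $\be$, i.e., a ``rational point''), plus the rigidity of ${\rm Ver}_p$ itself (triviality of its tensor autoequivalences). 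Without these two ingredients the proposal does not constitute a proof.
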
   

This theorem generalizes Theorem 3.2 of \cite{DM} for functors with 
values in ${\rm Vec}$ and its extension to ${\rm sVec}$ discussed in Section 3 of \cite{De}, and its proof given below is based on similar ideas. 
 
The functor $F$ is called the {\it Verlinde fiber functor}, as it generalizes the fiber functor 
$\C\to \sVec$ in characteristic zero, whose existence is a theorem of Deligne, \cite{De} (see also \cite{EGNO}, Theorem 9.9.26). 

The functor $F$ is constructed as follows (see \cite{O} for more details). First one constructs 
the {\it Frobenius functor} -- a symmetric tensor functor ${\rm Fr}: \C\to \C^{(1)}\boxtimes {\rm Ver}_p$, where $\C^{(1)}$ denotes the Frobenius twist of $\C$. 
Iterating this functor and taking tensor product in ${\rm Ver}_p$, one obtains the $n$-th Frobenius functor 
${\rm Fr}^n: \C\to \C^{(n)}\boxtimes {\rm Ver}_p$, where $\C^{(n)}$ is the $n$-th Frobenius twist of $\C$
(where ${\rm Fr}^1={\rm Fr}$). One then shows that for a sufficiently large $n$, ${\rm Fr}^n$ lands 
in ${\mathcal D}\boxtimes {\rm Ver}_p$, where ${\mathcal D}\subset \C^{(n)}$ is a nondegenerate fusion category. 
Then, using lifting theorems to characteristic zero (see \cite{EGNO}, Subsection 9.16) and Deligne's theorem in characteristic zero \cite{De}, 
one shows that there exists a symmetric tensor functor $\Phi: {\mathcal D}\to {\rm sVec}$. 
Then 
$$
(\Phi\boxtimes {\rm Id})\circ {\rm Fr}^n: \C\to {\rm sVec}\boxtimes {\rm Ver}_p, 
$$
and we get $F$ by composing this functor with the projection 
$$
\otimes: {\rm sVec}\boxtimes {\rm Ver}_p\to {\rm Ver}_p
$$ 

\begin{remark} In fact, it is shown in \cite{O} that the functor ${\rm Fr}^n$ lands in $\C^{(n)}\boxtimes {\rm Ver}_p^+$, 
so that $\Phi$ lands in ${\rm sVec}\boxtimes {\rm Ver}_p^+$.  
\end{remark} 

\begin{corollary} If $\C$ is a symmetric fusion category over $\kk$, then for any $X \in \C$, 
the Frobenius-Perron dimension $\FPdim(X)$ belongs to ${\Bbb Z}[q]\cap K.$ 
\end{corollary}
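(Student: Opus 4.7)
The plan is to transport the problem from $\C$ to $\mathrm{Ver}_p$ via the Verlinde fiber functor and then read off the values of $\FPdim$ from the characters computed in Lemma~\ref{char:Ver}.

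First, I would invoke Theorem~\ref{fiber} to get a symmetric tensor functor $F\colon \C \to \mathrm{Ver}_p$. The key general fact about tensor functors between fusion categories is that they preserve Frobenius--Perron dimension: the composition $\FPdim_{\mathrm{Ver}_p}\circ F$ is a ring homomorphism $\mathrm{Gr}(\C)\to\R$ which is non-negative on simple objects, and by Frobenius--Perron theory such a character is unique, hence equals $\FPdim_\C$. Therefore $\FPdim(X) = \FPdim(F(X))$.

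Next, decompose $F(X) = \sum_{r=1}^{p-1} n_r L_r$ with $n_r \in \Z_{\geq 0}$. By the remark before Proposition~\ref{Galois}, the Frobenius--Perron character on $\mathrm{Gr}(\mathrm{Ver}_p)$ coincides with $\chi_1$, so
\[
\FPdim(X) \;=\; \sum_{r=1}^{p-1} n_r\, \chi_1(L_r) \;=\; \sum_{r=1}^{p-1} n_r\, [r]_{q}.
\]
Since $q$ is a $2p$-th root of unity we have $q^{-1}=q^{2p-1}\in\Z[q]$, so $\Z[q]=\Z[q,q^{-1}]$, and each $[r]_q = \frac{q^r-q^{-r}}{q-q^{-1}} = q^{-(r-1)}+q^{-(r-3)}+\cdots+q^{r-1}$ lies in $\Z[q]$. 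Hence $\FPdim(X)\in\Z[q]$. Being a positive real number, it also lies in $K=\Q(q)\cap\R$, and the corollary follows.

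There is really no serious obstacle here; the only mildly non-trivial input is the preservation of $\FPdim$ under tensor functors, which is a standard consequence of Perron--Frobenius uniqueness. All other steps are direct applications of Theorem~\ref{fiber} and Lemma~\ref{char:Ver}.
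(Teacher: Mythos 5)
Your proposal is correct and follows essentially the same route as the paper: apply the Verlinde fiber functor from Theorem \ref{fiber}, use that tensor functors preserve $\FPdim$, identify $\FPdim$ on $\mathrm{Ver}_p$ with $\chi_1$, and conclude from reality that the value lies in $K$. The extra details you supply (Perron--Frobenius uniqueness for the preservation of $\FPdim$, and the explicit expansion of $[r]_q$ showing it lies in $\Z[q]$) are correct elaborations of steps the paper leaves implicit.
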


\begin{proof} By Theorem \ref{fiber}, we have the Verlinde fiber functor $F: \C\to {\rm Ver}_p$. 
This functor preserves $\FPdim.$ Thus, for $X \in \C$
$$
\FPdim(X) = \FPdim(F(X)) = \chi_{1}(F(X)) \in \Bbb Z[q].
$$
Also, $\FPdim(X)\in \Bbb R$, which implies the statement. 
\end{proof}

\subsection{$p$-adic dimensions} 

Let $\C$ be a symmetric tensor category over $\kk$. Let $X\in \C$. Then to $X$ one can attach its categorical dimension $\dim(X)\in \kk$, and it is shown in \cite{EGNO}, Exercise 9.9.9(ii) that in fact $\dim(X)\in \Bbb F_p\subset \kk$ (see also \cite{EHO}, Lemma 2.2). 

Further, it is shown in \cite{EHO} that
$\dim(X)$ is the reduction modulo $p$ of a richer invariant of $X$ which takes values in $\Bbb Z_p$, called the  $p$-adic dimension of $X$. In fact, there are two such invariants: the symmetric $p$-adic dimension 
$\Dim_+(X)$ and the exterior $p$-adic dimension $\Dim_-(X)$.  

Namely, let $d=d_0+pd_1+p^2d_2+...$ be a $p$-adic integer (where $d_i$ are the digits of $d$).
We can define the power series over $\Bbb F_p$,
$$
(1+z)^d:=(1+z)^{d_0}(1+z^p)^{d_1}(1+z^{p^2})^{d_2}...
$$

\begin{definition}(\cite{EHO}) We say that the symmetric $p$-adic dimension $\Dim_+(X)$ of $X$ is $d$ if
$$
\sum_j \dim(S^jX)z^j=(1-z)^{-d},
$$
and say that the exterior $p$-adic dimension $\Dim_-(X)$ of $X$ is $d$ if 
$$
\sum_j \dim(\wedge^jX)z^j=(1+z)^d.
$$
\end{definition} 

\begin{theorem} (\cite{EHO}) The  p-adic dimensions $\Dim_+(X)$, $\Dim_-(X)$ exist and are unique for any $X$. 
\end{theorem}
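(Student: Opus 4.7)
\emph{Uniqueness.} The factors $(1+z^{p^i})^{d_i}$ with $i\ge 1$ contribute only to coefficients of $z^k$ with $k\ge p$. Hence the coefficients of $z^0,\ldots,z^{p-1}$ in $(1+z)^d$ are exactly the binomials $\binom{d_0}{j}\in\F_p$, and the linear coefficient pins down $d_0\in\{0,\ldots,p-1\}$. Dividing out $(1+z)^{d_0}$ leaves a power series in $z^p$ to which the same argument applies, recovering $d_1,d_2,\ldots$ by induction on depth. Uniqueness for $\Dim_+$ follows by the same argument applied to the expansion of $(1-z)^{-d}$.

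\emph{Existence.} Set $d_0\in\{0,\ldots,p-1\}$ to be the integer lift of $\dim(X)\in\F_p$. Since $j!$ is invertible in $\k$ for $j<p$, the antisymmetrizer idempotent in $\k[S_j]$ is well defined, and the standard trace formula $\trace(g\mid X^{\otimes j})=\dim(X)^{c(g)}$ (with $c(g)$ the number of cycles of $g$) yields
$$
\dim(\wedge^j X)=\binom{d_0}{j}\in\F_p\qquad\text{for}\qquad 0\le j<p.
$$
Thus $f_X(z):=\sum_j \dim(\wedge^j X)z^j$ agrees with $(1+z)^{d_0}$ modulo $z^p$. The plan is then to exhibit an auxiliary object $X'$ associated to $X$ satisfying the recursion $f_X(z)=(1+z)^{d_0}\cdot f_{X'}(z^p)$ in $\F_p[[z]]$; iterating yields $\Dim_-(X)=d_0+p\,\Dim_-(X')$ and recovers all $p$-adic digits in the limit. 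Existence of $\Dim_+$ then follows by the parallel argument with $S^j$ and $\binom{d_0+j-1}{j}$ in place of $\wedge^j$ and $\binom{d_0}{j}$.

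\emph{Main obstacle.} The hard step is producing $X'$ and verifying the congruences $\dim(\wedge^{jp}X)\equiv \dim(\wedge^j X')\pmod p$ uniformly in $j$. Morally $X'$ should be a ``Frobenius twist'' of $X$ built from $\Z/p\Z$-invariants in $X^{\otimes p}$, but in an arbitrary symmetric tensor category such an object is not directly available and must either be constructed intrinsically or obtained after passing to a canonical categorical enlargement in which it lives. The combinatorial reason the scheme is consistent is Lucas's theorem, which underlies both the identity $(1+z)^{d_0+pd_1}=(1+z)^{d_0}(1+z^p)^{d_1}$ in $\F_p[[z]]$ and the factorization of $\binom{d}{j}$ through the base-$p$ digits of $d$ and $j$; together with the Frobenius identity $(1+z)^p=1+z^p$ in characteristic $p$, this is what forces the successive ``digit-by-digit'' structure on the generating series of $\dim(\wedge^j X)$.
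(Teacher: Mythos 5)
This theorem is imported from \cite{EHO}; the paper you were given contains no proof of it, so your attempt can only be judged on its own terms. Your uniqueness argument is fine: the factors $(1+z^{p^i})^{d_i}$, $i\ge 1$, do not affect degrees $<p$, so $d_0$ is read off from the linear coefficient, and dividing by the unit $(1+z)^{d_0}$ leaves a series in $z^p$ to which induction applies; the same works for $(1-z)^{-d}$. Likewise the computation $\dim(\wedge^j X)=\binom{\dim X}{j}$ for $j<p$ via the antisymmetrizer and the trace of a permutation on $X^{\otimes j}$ is correct, since $j!$ is invertible in that range.

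However, the substantive half of the theorem is existence, and there you have a genuine gap which you yourself flag: everything hinges on an object $X'$ with $f_X(z)=(1+z)^{d_0}f_{X'}(z^p)$, i.e.\ on the congruences $\dim(\wedge^{jp+a}X)=\binom{d_0}{a}\dim(\wedge^j X')$ for all $j$ and $0\le a<p$, and this object is never constructed nor are the congruences verified. This is not a routine step: in a general symmetric tensor category (note that \cite{EHO} is not restricted to fusion categories, so one cannot invoke the Verlinde fiber functor) there is no off-the-shelf Frobenius twist living in the same category --- indeed, in the circle of ideas used in the present paper the Frobenius construction is a functor ${\rm Fr}:\C\to\C^{(1)}\boxtimes{\rm Ver}_p$ landing in an enlarged category, and extracting from it the needed dimension recursion is precisely the nontrivial content. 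Without that step your argument only shows that the series agrees with $(1+z)^{d_0}$ modulo $z^p$; a priori the coefficients in degrees $\ge p$ could be arbitrary elements of $\F_p$ and need not be of the form $\binom{d_0}{a}\binom{d_1}{b}\cdots$ dictated by any $p$-adic $d$. Lucas-type identities explain why the desired answer is internally consistent, but they do not force the categorical dimensions to obey it. So as written the existence of $\Dim_\pm(X)$ remains unproved.
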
 

\section{Uniqueness of the Verlinde fiber functor}\label{uniqueness} 
The goal of this Section is to give a proof of Theorem \ref{ufiber}.

\subsection{Commutative algebras in ${\rm Ver}_{p}$}
\begin{lemma}\label{snilp}
 Let $X$ be an object of ${\rm Ver}_{p}$ with $\Hom(\be,X)=0$. Then there exists $N$ such that
$S^nX=0$ for all $n\ge N$.
\end{lemma}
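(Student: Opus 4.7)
The plan is to reduce to the simple-object vanishing in Proposition~\ref{prop31} via the binomial decomposition of the symmetric power of a direct sum.

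Since ${\rm Ver}_{p}$ is semisimple and the hypothesis $\Hom(\be,X)=0$ means that $X$ contains no copies of $L_{1}=\be$, I would first write $X\cong L_{r_{1}}\oplus\cdots\oplus L_{r_{k}}$ with each $r_{a}\in\{2,\ldots,p-1\}$ (and note that for $p=2$ the hypothesis forces $X=0$, so the lemma is trivial). Then I would invoke the standard isomorphism
$$S^{n}(Y\oplus Z)\;\cong\;\bigoplus_{i+j=n} S^{i}Y\otimes S^{j}Z$$
in the symmetric tensor category ${\rm Ver}_{p}$ and iterate it $k-1$ times to obtain
$$S^{n}X\;\cong\;\bigoplus_{i_{1}+\cdots+i_{k}=n} S^{i_{1}}L_{r_{1}}\otimes\cdots\otimes S^{i_{k}}L_{r_{k}}.$$

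Proposition~\ref{prop31} tells us that each factor $S^{i_{a}}L_{r_{a}}$ vanishes once $i_{a}+r_{a}>p$, so every surviving summand on the right-hand side satisfies $i_{a}\le p-r_{a}$ for all $a$. Summing over $a$, this forces $n=\sum_{a}i_{a}\le\sum_{a}(p-r_{a})$. Hence $N:=\sum_{a}(p-r_{a})+1$ has the desired property: $S^{n}X=0$ for every $n\ge N$.

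The only subtle step is the decomposition of $S^{n}$ over a direct sum in characteristic $p$: for $n\ge p$ one cannot use the symmetrizer idempotent $\tfrac{1}{n!}\sum_{\sigma}\sigma$. However, if one takes $S^{n}X$ to be the degree-$n$ component of the free commutative algebra on $X$ (the definition used in \cite{EHO}), the isomorphism of graded algebras $\mathrm{Sym}(Y\oplus Z)\cong\mathrm{Sym}(Y)\otimes\mathrm{Sym}(Z)$ in any symmetric tensor category delivers the decomposition immediately via its universal property. I would either cite this fact or insert a one-line verification; this is the point that requires the most care, though I do not expect it to be a genuine obstacle.
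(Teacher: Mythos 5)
Your proof is correct and follows essentially the same route as the paper: the paper argues by induction on the length of $X$, with Proposition~\ref{prop31} as the base case and the same decomposition $S^{n}(Y\oplus Z)\cong\bigoplus_{i+j=n}S^{i}Y\otimes S^{j}Z$ for the inductive step, which is just your iterated binomial decomposition phrased inductively. Your explicit bound $N=\sum_{a}(p-r_{a})+1$ and the remark on $p=2$ are harmless refinements, and your care about the definition of $S^{n}$ in characteristic $p$ is resolved exactly as you suggest, since \cite{EHO} defines symmetric powers via the free commutative algebra.
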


\begin{proof} We proceed by induction in the length of $X$. Then the base of induction is given by Proposition \ref{prop31}
and the induction step follows from the decomposition
$$S^n(X\oplus Y)=\bigoplus_{i=0}^nS^iX\otimes S^{n-i}Y.$$
\end{proof} 

Now let $A$ be a unital commutative associative algebra in the category ${\rm Ver}_{p}$, see e.g. \cite[Section 8.8]{EGNO}.
Let $m_n: A^{\otimes n}\to A$ be the multiplication morphism (so $m_1=\id_A$, $m_2=m$ is the multiplication in $A$, and
$m_n=m\circ (m_{n-1}\otimes \id_A)$ for $n\ge 2$). There is a unique decomposition $A=A_0\oplus A_1$ where $A_0$
is a multiple of $\be$ and $\Hom(\be,A_1)=0$.

\begin{lemma} \label{anilp}
 There exists $N$ such that the restriction of $m_n$ to $A_1^{\otimes n}$ equals zero for $n\ge N$.
\end{lemma}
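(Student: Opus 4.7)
The plan is to reduce the statement to Lemma \ref{snilp} by exploiting the commutativity of $A$ to factor $m_n$ through the symmetric power $S^nA$, and then restricting this factorization to $A_1^{\otimes n}$.

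First I would observe that because $A$ is a commutative algebra in the symmetric category ${\rm Ver}_p$, the iterated multiplication $m_n : A^{\otimes n} \to A$ is $S_n$-invariant: for every $\sigma \in S_n$ one has $m_n \circ \sigma_{A^{\otimes n}} = m_n$, where $\sigma_{A^{\otimes n}}$ is the symmetry on $A^{\otimes n}$ induced by $\sigma$. (This is the usual argument that commutativity plus associativity imply invariance of $m_n$ under arbitrary permutations of the factors; in a symmetric monoidal category, iterated braiding produces every element of $S_n$.) Taking $S^nA$ to be defined as the coinvariants $(A^{\otimes n})_{S_n}$, the morphism $m_n$ therefore descends to a morphism $\bar m_n : S^nA \to A$.

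Next I would restrict to $A_1^{\otimes n}$. Since ${\rm Ver}_p$ is semisimple, the decomposition $A = A_0 \oplus A_1$ yields an $S_n$-equivariant inclusion $A_1^{\otimes n} \hookrightarrow A^{\otimes n}$ with $S_n$-equivariant projection in the opposite direction. The composition $A_1^{\otimes n} \hookrightarrow A^{\otimes n} \xrightarrow{m_n} A$ is thus $S_n$-invariant, so it factors through the canonical projection $A_1^{\otimes n} \twoheadrightarrow S^nA_1 = (A_1^{\otimes n})_{S_n}$. In particular, if $S^nA_1 = 0$, then $m_n|_{A_1^{\otimes n}} = 0$.

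Finally, I would apply Lemma \ref{snilp} to the object $X = A_1$, whose hypothesis $\Hom(\be, A_1) = 0$ holds by the construction of the decomposition. This produces an $N$ such that $S^nA_1 = 0$ for all $n \geq N$, which, combined with the previous paragraph, gives the required vanishing of $m_n|_{A_1^{\otimes n}}$. The only delicate point is verifying that commutativity really does force $m_n$ to factor through the coinvariants in this categorical setting where $n!$ need not be invertible in $\kk$; however, this factorization is purely formal (it requires only the universal property of coinvariants together with $S_n$-invariance of $m_n$), so no arithmetic obstruction arises and the argument goes through.
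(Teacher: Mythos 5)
Your proof is correct and is essentially the paper's argument: the paper likewise observes that commutativity makes $m_n|_{A_1^{\otimes n}}$ factor through the canonical map $A_1^{\otimes n}\to S^nA_1$ and then invokes Lemma \ref{snilp} for $X=A_1$. Your intermediate factorization through $S^nA$ is an unnecessary (but harmless) detour, and your closing remark about the factorization being formal (no division by $n!$ needed, since $S^n$ is defined as a quotient) is exactly the right justification.
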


\begin{proof} Since $A$ is commutative, the restriction of $m_n$ to  $A_1^{\otimes n}$ factors through the canonical map
 $A_1^{\otimes n}\to S^nA_1$. The result follows from Lemma \ref{snilp}.
\end{proof}

Let $J$ be the image of multiplication morphism $m: A\otimes A_1\to A$. Then $J$ is an ideal in $A$
(note that since $A$ is commutative, there is no difference between left, right and two-sided ideals 
in $A$). The ideal $J$ is nilpotent:

\begin{lemma} \label{inilp}
 There exists $N$ such that the restriction of $m_n$ to $J^{\otimes n}$ equals zero for $n\ge N$.
\end{lemma}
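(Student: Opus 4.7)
The plan is to reduce to Lemma \ref{anilp} by exploiting the fact that every element of $J$ has an explicit factor coming from $A_1$. By the definition of $J$ as an image, the multiplication morphism $m: A\otimes A_1\to A$ factors as an epimorphism $A\otimes A_1\twoheadrightarrow J$ followed by the inclusion $J\hookrightarrow A$. Taking $n$-fold tensor products, I obtain an epimorphism
$$
\pi: (A\otimes A_1)^{\otimes n}\twoheadrightarrow J^{\otimes n}.
$$
It therefore suffices to prove that $m_n\circ\iota^{\otimes n}\circ \pi=0$ for $n$ sufficiently large, where $\iota:J\hookrightarrow A$.

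The key step is to identify this composition. The morphism $m_n\circ\iota^{\otimes n}\circ\pi:(A\otimes A_1)^{\otimes n}\to A$ is, by construction, the iterated multiplication of $n$ factors of the form $(a_i,b_i)\in A\otimes A_1$; that is, it is obtained by composing multiplications in $A$ in some fixed order. Using the symmetric braiding of ${\rm Ver}_p$ (which is allowed because $A$ is a commutative algebra with respect to it), I can reorder the tensor factors and write
$$
m_n\circ\iota^{\otimes n}\circ\pi = m\circ (m_n\otimes m_n)\circ\sigma,
$$
where $\sigma:(A\otimes A_1)^{\otimes n}\xrightarrow{\sim}A^{\otimes n}\otimes A_1^{\otimes n}$ is the symmetry isomorphism grouping the $A$-factors on the left and the $A_1$-factors on the right. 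Since $A$ is commutative in ${\rm Ver}_p$, this identity really is an equality of morphisms, not merely equal up to an unspecified sign.

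By Lemma \ref{anilp}, there exists $N$ such that $m_n$ restricted to $A_1^{\otimes n}$ equals zero for all $n\ge N$. For such $n$, the composite $m\circ(m_n\otimes m_n)$ vanishes on $A^{\otimes n}\otimes A_1^{\otimes n}$, hence $m_n\circ\iota^{\otimes n}\circ\pi=0$. Since $\pi$ is an epimorphism in the abelian category ${\rm Ver}_p$, this forces $m_n\circ\iota^{\otimes n}=0$, proving the lemma.

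I do not anticipate a serious obstacle; the only point requiring minor care is the reordering step, where the symmetric braiding of ${\rm Ver}_p$ introduces automorphisms of the tensor factors, but the commutativity axiom for $A$ says precisely that $m$ is invariant under these, so the identification of the two iterated products is legitimate.
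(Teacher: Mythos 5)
Your proof is correct and follows essentially the same route as the paper: precompose with the epimorphism $(A\otimes A_1)^{\otimes n}\twoheadrightarrow J^{\otimes n}$, use associativity and commutativity to rewrite the iterated product as $m\circ(m_n\otimes m_n)$ on $A^{\otimes n}\otimes A_1^{\otimes n}$, and conclude by Lemma \ref{anilp}. The only difference is that you spell out the epimorphism and the symmetry isomorphism explicitly, which the paper leaves implicit by phrasing the argument in terms of images.
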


\begin{proof} Since $A$ is associative, the image of $m_n$ restricted to $J^{\otimes n}$ 
equals the image of $m_{2n}$ restricted to $(A\otimes A_1)\otimes \ldots \otimes (A\otimes A_1)$. Since $A$ is commutative, this is the same as image of $m_{2n}=m\circ (m_n\otimes m_n)$
restricted to $A^{\otimes n}\otimes A_1^{\otimes n}$. The result follows from Lemma \ref{anilp}.
\end{proof}

The unit object $\be \in {\rm Ver}_{p}$ has an obvious structure of unital, asocialtive, commutative
algebra.

\begin{proposition} \label{homto1}
Let $A\in {\rm Ver}_{p}$ be a unital, associative, commutative algebra.
Then there exists a unital algebra homomorphism $A\to \be$.
\end{proposition}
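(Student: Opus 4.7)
The plan is to quotient $A$ by the nilpotent ideal $J$ and then invoke the existence of characters on nonzero finite-dimensional commutative $\kk$-algebras.

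First I would verify that $A_1 \subseteq J$. Precomposing the multiplication $m \colon A \otimes A_1 \to A$ with the summand inclusion $u \otimes \id \colon \be \otimes A_1 \hookrightarrow A \otimes A_1$ (where $u \colon \be \to A$ is the unit) gives the canonical inclusion $A_1 \hookrightarrow A$ by the unit axiom, so $A_1$ sits inside the image $J$. By the Noether isomorphism theorem in the abelian category ${\rm Ver}_p$, the quotient $B := A/J$ is then a quotient of $A/A_1 = A_0$, which is a direct sum of copies of $\be$; hence so is $B$.

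Next I would observe that $B \neq 0$ whenever $A \neq 0$ (which is implicit in $A$ being unital): otherwise $J = A$, but Lemma \ref{inilp} then forces $u = m_n \circ u^{\otimes n} = 0$ for large $n$, contradicting $A \neq 0$. Because the underlying object of $B$ is a sum of copies of $\be$, on which the symmetric braiding of ${\rm Ver}_p$ acts as the classical flip, the algebra structure on $B$ is precisely that of an ordinary nonzero finite-dimensional commutative unital $\kk$-algebra. Since $\kk$ is algebraically closed, any such algebra admits a character: pick a maximal ideal $\mathfrak{m} \subset B$, so that $B/\mathfrak{m} \cong \kk = \be$. The composite
\[
A \twoheadrightarrow B \twoheadrightarrow B/\mathfrak{m} \cong \be
\]
is then the desired unital algebra homomorphism.

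There is no serious obstacle, since the substantive work---establishing that $J$ is nilpotent---was already carried out in Lemmas \ref{snilp}--\ref{inilp}. The only subtle point is identifying an algebra object in ${\rm Ver}_p$ whose underlying object is a sum of copies of $\be$ with an ordinary commutative $\kk$-algebra, which is immediate because the symmetric structure on $\be$ is trivial.
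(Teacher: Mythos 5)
Your proof is correct and follows essentially the same route as the paper: show $A_1\subseteq J$ using the unit, conclude $J$ is a proper nilpotent ideal via Lemma \ref{inilp}, identify $A/J$ as a nonzero ordinary commutative $\kk$-algebra (a multiple of $\be$), and produce a character since $\kk$ is algebraically closed (the paper cites the Nullstellensatz for this last step). The extra details you supply, such as why $A_1\subseteq J$ and why $J\ne A$, are just the points the paper leaves implicit.
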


\begin{proof} Since $A$ is unital, the ideal $J$ contains $A_1$. By Lemma \ref{inilp} this ideal
is proper: $J\ne A$. Thus $A/J$ is a nonzero unital, associative, commutative algebra contained
in subcategory $\Vec \subset {\rm Ver}_{p}$ generated by the unit object. Thus by Hilbert's
Nullstellensatz there exists a unital homomorphism $A/J\to \be$. The result follows.
\end{proof}

\subsection{Fiber functors and commutative algebras} \label{commalg}
Let $\C$ be a symmetric fusion category and let $F: \C \to {\rm Ver}_{p}$ be a symmetric tensor
functor. Let $I: {\rm Ver}_{p}\to \C$ be the right adjoint of $F$ (it exists since the categories $\C$
and ${\rm Ver}_{p}$ are finite semisimple). 

\begin{proposition}(cf. \cite[Lemma 3.5]{DMNO}) \label{falg}

(i) The object $A:=I(\be)\in \C$ has a natural structure of unital,
associative, commutative algebra in $\C$. 

(ii) The functor $I$ induces a tensor equivalence $\tilde I: F(\C)\simeq \C_A$ 
of the image $F(\C)$ of $F$ in ${\rm Ver}_{p}$ (i.e., the fusion subcategory generated by objects of the form $F(X)$) 
with the category $\C_A$ of right $A-$modules in $\C$.

(iii) The functor $\tilde I \circ F: \C \to \C_A$ is naturally isomorphic to the functor
$F_A=?\otimes A$ as a tensor functor.
\end{proposition}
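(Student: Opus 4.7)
The plan is to leverage that $F$ is strong symmetric monoidal, so its right adjoint $I$ is automatically lax symmetric monoidal, with structure morphism
$$I(Y) \otimes I(Z) \xrightarrow{\eta} IF(I(Y) \otimes I(Z)) \cong I(FI(Y) \otimes FI(Z)) \xrightarrow{I(\epsilon \otimes \epsilon)} I(Y \otimes Z)$$
built from the monoidal structure of $F$ together with the unit $\eta$ and counit $\epsilon$ of the adjunction. For part (i), specializing to $Y = Z = \be$ gives the multiplication $\mu: A \otimes A \to I(\be \otimes \be) = A$, the unit $\iota: \be \to IF(\be) = A$ is the adjunction unit, and associativity, unitality and commutativity are formal consequences of the lax symmetric monoidal structure (the last using that the symmetry on $\be \otimes \be$ is the identity).

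For part (iii), the main input is the projection formula $I(F(X) \otimes Y) \cong X \otimes I(Y)$, natural in $X \in \C$ and $Y \in {\rm Ver}_p$. I would prove this by Yoneda: using rigidity and $F(X^*) \cong F(X)^*$, both sides represent the functor $Z \mapsto \Hom_{{\rm Ver}_p}(F(Z \otimes X^*), Y)$ on $\C^{\op}$. Taking $Y = \be$ yields $\tilde I F(X) = IF(X) \cong X \otimes A$ as $A$-modules, and unwinding the lax structure on $I$ shows that this is an isomorphism of tensor functors, identifying $\tilde I \circ F$ with $F_A$. The $A$-action on $\tilde I(Y) := I(Y)$, needed to define $\tilde I: F(\C) \to \C_A$, is the composite $A \otimes I(Y) = I(\be) \otimes I(Y) \to I(\be \otimes Y) = I(Y)$ coming from the lax structure.

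For part (ii), full faithfulness of $\tilde I$ on $F(\C)$ reduces, by combining (iii) with the free-forgetful adjunction between $\C$ and $\C_A$, to the chain
$$\Hom_{\C_A}(\tilde I F(X), \tilde I F(Y)) \cong \Hom_{\C_A}(X \otimes A, IF(Y)) \cong \Hom_\C(X, IF(Y)) \cong \Hom_{{\rm Ver}_p}(F(X), F(Y)),$$
and the strong monoidality of $\tilde I$ follows because the lax comparison $I(F(X_1)) \otimes_A I(F(X_2)) \to I(F(X_1) \otimes F(X_2))$ is an isomorphism by the projection formula (both sides compute to $(X_1 \otimes X_2) \otimes A$). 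The step I expect to be the main obstacle is essential surjectivity of $\tilde I$: here I would argue that $A$, arising from a tensor functor into a semisimple target, is a separable (étale) commutative algebra in $\C$, so $\C_A$ is semisimple and every simple $A$-module $M$ is a direct summand of some free module $X \otimes A = \tilde I F(X)$; the corresponding idempotent in $\End_{\C_A}(X \otimes A)$ transports via full faithfulness to an idempotent in $\End_{F(\C)}(F(X))$ whose image $Y \in F(\C)$ satisfies $\tilde I(Y) \cong M$, completing the equivalence.
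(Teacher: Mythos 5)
Your constructions for (i) and (iii) are sound and genuinely more self-contained than the paper's treatment: the paper simply cites \cite[Lemma 3.5, Remark 3.7]{DMNO} for (i), identifies $I$ with the internal Hom $\underline{\Hom}(\be,?)$ for the $\C$-module category ${\rm Ver}_p$ and invokes \cite[Corollary 7.10.5(ii)]{EGNO}, \cite[(7.25)]{EGNO} and the tensor-structure argument of \cite{ENO} for (ii) and (iii). Your doctrinal-adjunction construction of the algebra and the Yoneda proof of the projection formula $I(F(X)\otimes Y)\cong X\otimes I(Y)$ are exactly the content behind those citations, and your full-faithfulness chain is correct once one observes (triangle identity) that the composite of your identifications is the map $f\mapsto \tilde I(f)$, so bijectivity of the composite forces bijectivity of $\tilde I$ on Hom spaces; passing from objects $F(X)$ to all of $F(\C)$ is then a routine idempotent-completion step.

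The genuine gap is in essential surjectivity: you assert that $A=I(\be)$ is separable (\'etale) ``because it arises from a tensor functor into a semisimple target.'' In characteristic zero this is indeed \cite[Lemma 3.5]{DMNO}, but the characteristic-zero arguments for such separability/semisimplicity statements rest on nonvanishing of global or categorical dimensions, which fails here: ${\rm Ver}_p$ is degenerate, with global dimension $\sum_{r=1}^{p-1} r^2\equiv 0 \bmod p$ for $p>3$, so you can neither quote nor transplant that proof. Separability of $A$ --- equivalently semisimplicity of $\C_A$, equivalently the statement that every $A$-module is a summand of a free module --- is precisely the crux of part (ii), and asserting it comes close to assuming what is to be proved, since the cleanest way to see it is via the equivalence $\C_A\simeq F(\C)$ itself. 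The paper avoids needing it as an input: ${\rm Ver}_p$ is a semisimple, hence exact, module category over $\C$ via $F$, and \cite[Corollary 7.10.5(ii)]{EGNO} yields the equivalence of the module subcategory generated by $\be$ with $\C_A$ with semisimplicity of $\C_A$ appearing as an output. If you want to keep your adjunction-theoretic route, replace the separability claim by a Barr--Beck argument for the corestriction $F:\C\to F(\C)$: its right adjoint is exact and conservative (every simple of $F(\C)$ is a summand of some $F(X)$, and $\Hom(X,I(Y))=\Hom(F(X),Y)\neq 0$), so $F(\C)\simeq \C^{IF}$, and your projection formula identifies the monad $IF$ with $?\otimes A$, whence $\C^{IF}\simeq\C_A$. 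A smaller point: in the strong-monoidality step it does not suffice that source and target are both abstractly isomorphic to $(X_1\otimes X_2)\otimes A$; you must check that the lax comparison map itself corresponds to an isomorphism under the projection-formula identifications, a routine but necessary diagram chase.
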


\begin{proof} Part (i) follows from Lemma 3.5 of \cite{DMNO}, see \cite[Remark 3.7]{DMNO}.
The functor $F$ makes ${\rm Ver}_{p}$ into a module category over $\C$, see e.g.
\cite[Example 7.4.7]{EGNO}. It follows from definitions that the functor $I$ identifies
with the internal Hom functor $\underline{\Hom}(\be,?)$. Thus
the fact that $I$ induces an equivalence $\tilde I$ follows from \cite[Corollary 7.10.5 (ii)]{EGNO}.
Also using \cite[(7.25)]{EGNO} we get a canonical isomorphism of functors in part (iii). Now
one defines tensor structure on the functor $\tilde I$ as in the proof of Theorem 3.1 of
\cite{ENO}; it follows immediately from definitions that the isomorphism of functors in (iii)
respects this tensor structure.
\end{proof} 

Note that Proposition \ref{falg} (ii) implies that $A=\tilde I(\be)$ is a simple $A-$module. 
Equivalently we have 

\begin{corollary} \label{noid}
The algebra $A$ has no nontrivial ideals. 
\end{corollary}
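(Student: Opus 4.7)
The plan is to read the corollary off directly from Proposition \ref{falg}(ii). That proposition exhibits $\tilde I : F(\C) \to \C_A$ as a tensor equivalence sending the unit $\be \in F(\C)$ to $A \in \C_A$. Since $\be$ is a simple object of the semisimple category $F(\C)$, its image $A = \tilde I(\be)$ must be a simple object of $\C_A$, i.e., $A$ admits no nontrivial $A$-subobject when regarded as a right $A$-module via its own multiplication.

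Next I would translate this categorical simplicity into the desired statement about ideals. A (right) ideal of $A$ in $\C$ is, by definition, a subobject $J \hookrightarrow A$ in $\C$ stable under the right multiplication $A \otimes A \to A$, which is precisely a subobject of $A$ in $\C_A$. Hence simplicity of $A$ in $\C_A$ gives that $A$ has no nontrivial right ideals. Because $A$ is commutative, left, right, and two-sided ideals coincide (as already noted in the paragraph preceding Lemma \ref{inilp}), and we conclude that $A$ has no nontrivial ideals at all.

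I do not expect any real obstacle; the entire content lies in Proposition \ref{falg}(ii), and the remaining step is a definitional unwinding. The only mild point of care is confirming that subobjects of $A$ in $\C_A$ correspond to what one should call ideals of the algebra object $A$ in $\C$; this is standard in the general formalism of algebras and modules in tensor categories (see, e.g., \cite[Section 7.8, 8.8]{EGNO}).
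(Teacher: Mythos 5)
Your argument is correct and is exactly the paper's: the corollary is read off from Proposition \ref{falg}(ii) by noting that the equivalence $\tilde I$ carries the simple object $\be$ to $A$, so $A$ is simple as an object of $\C_A$, and submodules of $A$ over itself are precisely its ideals (left, right, and two-sided coinciding by commutativity). No issues.
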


Let $\tilde F: \C_A\to {\rm Ver}_{p}$ denote the composition of $\tilde I^{-1}$ with the
embedding functor $F(\C)\to {\rm Ver}_{p}$. Note that functor $\tilde F$ is injective
(that is, fully faithful). Then Proposition \ref{falg} (iii) implies

\begin{corollary} \label{compo}
The functor $F$ is isomorphic to the composition $\tilde F\circ F_A$ as a tensor functor.
\end{corollary}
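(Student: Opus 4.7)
The plan is to observe that Corollary \ref{compo} is essentially a bookkeeping consequence of Proposition \ref{falg}(iii), obtained by applying $\tilde I^{-1}$ and then post-composing with the inclusion $F(\C)\hookrightarrow {\rm Ver}_p$.

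First, I would record that $F$ factors through its image: writing $\iota: F(\C)\hookrightarrow {\rm Ver}_p$ for the fully faithful inclusion of the fusion subcategory generated by the $F(X)$, we have $F=\iota\circ F'$ for a tensor functor $F':\C\to F(\C)$. By the definition given just before the corollary, $\tilde F=\iota\circ \tilde I^{-1}$, so the content of Corollary \ref{compo} is the assertion $\iota\circ F'\cong \iota\circ \tilde I^{-1}\circ F_A$ as tensor functors. Since $\iota$ is fully faithful, it suffices to produce a tensor isomorphism $F'\cong \tilde I^{-1}\circ F_A:\C\to F(\C)$.

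Next, I would invoke Proposition \ref{falg}(iii), which already supplies a natural tensor isomorphism $\tilde I\circ F'\cong F_A$ as functors $\C\to \C_A$. Applying the tensor equivalence $\tilde I^{-1}:\C_A\xrightarrow{\sim} F(\C)$ from Proposition \ref{falg}(ii) on the left and using the unit of the adjoint equivalence $\tilde I^{-1}\circ \tilde I\cong \id_{F(\C)}$ (a tensor isomorphism because $\tilde I$ is a tensor equivalence), we obtain the desired tensor isomorphism $F'\cong \tilde I^{-1}\circ F_A$. Composing with $\iota$ finishes the argument.

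There is essentially no obstacle; what one must simply check is compatibility of tensor structures at each stage. This is automatic: $\tilde I^{-1}$ carries a canonical tensor structure as the quasi-inverse of the tensor equivalence $\tilde I$, and $\iota$ is a tensor inclusion of a fusion subcategory whose tensor constraints are identities. The role of the corollary is not to record new information beyond Proposition \ref{falg}, but to repackage it into the factorization $F\cong \tilde F\circ F_A$ that will feed into the uniqueness argument for the Verlinde fiber functor.
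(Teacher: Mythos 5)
Your argument is correct and is exactly the route the paper takes: the corollary is recorded there as an immediate consequence of Proposition \ref{falg}(iii), obtained by composing the isomorphism $\tilde I\circ F\cong F_A$ with $\tilde I^{-1}$ and the embedding $F(\C)\hookrightarrow {\rm Ver}_p$, which is precisely what you spell out.
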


\subsection{Autoequivalences of ${\rm Ver}_{p}$}
The following result contains a special case of Theorem \ref{ufiber} when $\C={\rm Ver}_{p}$.

\begin{lemma} \label{autover}
Any tensor autoequivalence of ${\rm Ver}_{p}$ (not necessarily preserving the braiding) 
is isomorphic to the identity functor as a tensor functor.
\end{lemma}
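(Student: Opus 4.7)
The proof splits into two stages: determining how $G$ acts on isomorphism classes of simple objects, and then upgrading this information to an isomorphism of tensor functors.

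\textbf{Stage 1: action on simples.} Any tensor equivalence preserves the unit and $\FPdim$. By Lemma~\ref{char:Ver}, the value $\FPdim(L_r) = [r]_q$ satisfies $[r]_q = [p-r]_q$ and is attained on exactly this pair of simples, so writing $G(L_r) \cong L_{\sigma(r)}$ defines a permutation $\sigma$ of $\{1,\ldots,p-1\}$ with $\sigma(1) = 1$ and $\sigma(r) \in \{r, p-r\}$ for each $r$; bijectivity then forces $\sigma(p-1) = p-1$ (alternatively, $L_{p-1}$ is the unique nontrivial invertible simple). Suppose toward contradiction $\sigma \neq \id$, and let $r_0 \geq 2$ be minimal with $\sigma(r_0) = p - r_0$. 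If $r_0 \geq 3$, apply $G$ to the Verlinde rule $L_2 \otimes L_{r_0 - 1} = L_{r_0 - 2} \oplus L_{r_0}$: by minimality, $\sigma$ is the identity on indices $2$, $r_0-1$, and $r_0-2$, so the left-hand side stays $L_{r_0 - 2} \oplus L_{r_0}$ while the right-hand side becomes $L_{r_0-2} \oplus L_{p-r_0}$, forcing $p = 2r_0$ and contradicting primality. If $r_0 = 2$, an induction using $L_2 \otimes L_{k-1} = L_{k-2} \oplus L_k$ propagated through $\sigma(L_2) = L_{p-2}$ forces $\sigma(L_k) = L_{p-k}$ for every even $k$ in range, and in particular $\sigma(L_{p-1}) = L_1 = \sigma(L_1)$, again a contradiction. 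The degenerate cases $p = 2, 3$ are immediate.

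\textbf{Stage 2: tensor isomorphism.} Since $G(L_r) \cong L_r$ for every $r$, semisimplicity of $\mathrm{Ver}_p$ lets us choose an additive natural isomorphism $\alpha: G \to \Id$ with $\alpha_{\mathbf{1}}$ equal to the unit constraint of $G$. Transporting the tensor structure of $G$ along $\alpha$ reduces the claim to showing that every tensor structure $J_{X,Y}: X \otimes Y \to X \otimes Y$ on the identity functor of $\mathrm{Ver}_p$ is equivalent to the trivial one. By naturality and Schur's lemma, $J_{L_r, L_s}$ acts by a scalar $\lambda_{r,s}^t \in \kk^\times$ on each simple summand $L_t \subset L_r \otimes L_s$, with $\lambda_{1,r}^r = \lambda_{r,1}^r = 1$ from the unit axioms. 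It suffices to produce scalars $\mu_r \in \kk^\times$ with $\mu_1 = 1$ satisfying $\lambda_{r,s}^t = \mu_t(\mu_r\mu_s)^{-1}$, for then the natural automorphism $\phi_X$ of $\Id$ acting by $\mu_r$ on the $L_r$-isotypic component will intertwine $J$ with the trivial tensor structure.

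Since $L_2$ generates $\mathrm{Ver}_p$ as a tensor category, set $\mu_2 = 1$ and define $\mu_k = \lambda_{2, k-1}^k\,\mu_{k-1}$ inductively for $k \geq 3$, using the fusion rule $L_2 \otimes L_{k-1} = L_{k-2} \oplus L_k$. The main obstacle is verifying that the remaining equations $\lambda_{r,s}^t = \mu_t(\mu_r\mu_s)^{-1}$ hold for arbitrary $(r, s, t)$; the plan is to induct on $r + s$, with the pentagon axiom for $J$ applied to triples of the form $(L_2, L_{r-1}, L_s)$ providing the inductive step by expressing $\lambda_{r,s}^t$ in terms of scalars of strictly smaller combined index, each already controlled by the $\mu$'s. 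The pentagon coherence, together with multiplicity-freeness of the Verlinde fusion rules, ensures that the relations produced by different paths in the associativity diagrams agree, so no inconsistency arises.
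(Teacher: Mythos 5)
Your Stage 1 is fine: tensor functors between fusion categories preserve $\FPdim$, the only coincidences among the values $[r]_q$ are $[r]_q=[p-r]_q$, and your fusion-rule bootstrap with $L_2\otimes L_{k-1}=L_{k-2}\oplus L_k$ does force the induced permutation of simples to be trivial. The problem is Stage 2, which is where essentially all of the content of the lemma lives, and there you only state a plan. The key assertion --- that the scalars $\lambda_{r,s}^t$ extracted from a tensor structure $J$ on $\Id_{{\rm Ver}_p}$ can always be written as a ``coboundary'' $\mu_r\mu_s\mu_t^{-1}$, with the verification ``provided by the pentagon axiom'' and with consistency ``ensured by multiplicity-freeness'' --- is exactly what has to be proved, and the justification you offer is not valid as a general principle. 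The constraint satisfied by the $\lambda$'s is not determined by the fusion ring alone: the compatibility of $J$ with associativity only yields $\lambda_{r,s}^t\lambda_{t,u}^v=\lambda_{s,u}^w\lambda_{r,w}^v$ for those pairs $(t,w)$ where the corresponding entry of the associator ($6j$-symbol) of ${\rm Ver}_p$ is nonzero, so your induction needs non-vanishing statements about the associator, which you never address. Moreover, ``multiplicity-free fusion plus pentagon implies no obstruction'' is false in general: for $\C=\mathrm{Vec}_G$ with $G$ abelian the fusion rules are multiplicity-free, yet tensor structures on the identity functor modulo isomorphism form $H^2(G,\kk^\times)$, which is typically nontrivial. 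So triviality of such $J$ for ${\rm Ver}_p$ must use specific structural input (e.g.\ generation by $L_2$ together with non-vanishing of the relevant $6j$-entries, or a presentation of the category), and none of this is supplied; as written, Stage 2 is a genuine gap rather than a proof.

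For comparison, the paper avoids this computation entirely: it identifies ${\rm Ver}_p$ with the semisimplification of the category of tilting modules over $SL(2)$ and invokes the universal property of that category (equivalently, of the Temperley--Lieb category, \cite[Theorem 2.4]{O1}), which says that a tensor functor out of it is determined by a self-dual generating object with prescribed evaluation/coevaluation data; normalizing this data trivializes any autoequivalence. If you want to complete your approach instead, you would need to either prove the required non-vanishing of the $2\times 2$ associator matrices with one index equal to $L_2$ (e.g.\ via Jones--Wenzl projectors and non-vanishing theta symbols in the semisimplified quotient) and then actually run your induction on $r+s$, or replace Stage 2 by the universal-property argument.
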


\begin{proof} It was proved in \cite[4.3.2]{O} that the category ${\rm Ver}_{p}$ is equivalent to the
semisimple quotient of the category of tilting modules over $SL(2)$.
The result follows from the universal property of the category
of tilting modules over $SL(2)$, see \cite[Theorem 2.4]{O1} (this is equivalent to the description
of tilting modules over $SL(2)$ as the idempotent completion of the Temperley-Lieb category).
\end{proof}

\begin{corollary} \label{injiso}
Let $\A$ be a fusion category and let $G_1, G_2$ be two injective tensor
functors $\A \to {\rm Ver}_{p}$. Then $G_1$ and $G_2$ are isomorphic as tensor
functors.
\end{corollary}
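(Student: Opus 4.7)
The plan is to reduce Corollary \ref{injiso} to Lemma \ref{autover} by first showing that the essential images of $G_1$ and $G_2$ coincide as fusion subcategories of ${\rm Ver}_{p}$. Since $G_1$ and $G_2$ are fully faithful tensor functors between fusion categories, each $G_i$ restricts to a tensor equivalence $\A \to \mB_i$ onto its essential image $\mB_i := G_i(\A)$, and each $\mB_i$ is a fusion subcategory of ${\rm Ver}_{p}$.

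The first main step is to classify the fusion subcategories of ${\rm Ver}_{p}$. For $p > 2$, I would use the Deligne-product decomposition ${\rm Ver}_{p} = {\rm Ver}_{p}^{+} \boxtimes \sVec$ together with the observation that ${\rm Ver}_{p}^{+}$ admits no proper nontrivial fusion subcategory: for each odd $r$ with $1 < r < p$, inspection of the Verlinde fusion rules shows that iterated tensor products of $L_r$ sweep out every odd simple, so $L_r$ already generates ${\rm Ver}_{p}^{+}$. Moreover, ${\rm Ver}_{p}^{+}$ contains no invertible object other than $\be$, so no ``twisted diagonal'' subcategory can arise in the Deligne decomposition. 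Combined, these force the fusion subcategories of ${\rm Ver}_{p}$ to be exactly $\Vec$, $\sVec$, ${\rm Ver}_{p}^{+}$, and ${\rm Ver}_{p}$ (with the obvious collapses for $p = 2, 3$). These four subcategories have ranks $1$, $2$, $(p-1)/2$, and $p-1$, which are pairwise distinct for $p \ge 7$; for $p = 5$ one verifies directly that $\sVec$ and ${\rm Ver}_{5}^{+}$ have non-isomorphic fusion rings. Hence $\mB_1 \cong \A \cong \mB_2$ forces $\mB_1 = \mB_2 =: \mB$.

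With $\mB_1 = \mB_2$ established, the composite $\psi := G_2 \circ G_1^{-1} : \mB \to \mB$, formed using any quasi-inverse of $G_1 : \A \to \mB$, is a tensor autoequivalence of $\mB$. Each of the four possible $\mB$'s is a tensor factor of ${\rm Ver}_{p}$ under the Deligne decomposition, so $\psi$ extends to a tensor autoequivalence $\tilde\psi := \psi \boxtimes \id$ of ${\rm Ver}_{p}$. By Lemma \ref{autover}, $\tilde\psi$ is isomorphic to $\id_{{\rm Ver}_{p}}$ as a tensor functor; restricting this natural tensor isomorphism to $\mB$ yields $\psi \cong \id_\mB$, and therefore $G_2 \cong \psi \circ G_1 \cong G_1$ as tensor functors. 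The main obstacle is the first step---the classification of fusion subcategories of ${\rm Ver}_{p}$---which is combinatorial but requires careful bookkeeping with the Verlinde fusion rules, in particular the claim that ${\rm Ver}_{p}^{+}$ is generated by any of its non-unit simples; once this is in hand, the second step is a routine application of Lemma \ref{autover}.
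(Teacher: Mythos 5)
Your proposal is correct and follows essentially the same route as the paper: identify $G_1(\A)$ and $G_2(\A)$ via the classification of fusion subcategories of ${\rm Ver}_p$, extend the resulting tensor autoequivalence to all of ${\rm Ver}_p$ using the $\boxtimes$-complement, and conclude with Lemma \ref{autover}. The only difference is that you sketch the classification of fusion subcategories yourself (the paper simply cites \cite[Proposition 3.3]{O}) and transport the autoequivalence along the common image rather than along $\A$, which is a cosmetic variation.
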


\begin{proof} The classification of fusion subcategories of ${\rm Ver}_{p}$ 
(see \cite[Proposition 3.3]{O})
implies that the images $G_1(\A)\subset {\rm Ver}_{p}$ and $G_2(\A)$ are the same. 
Thus there exists a tensor
autoequivalence $R$ of $\A$ such that $G_2\simeq G_1\circ R$. Again by the classification 
of fusion subcategories of ${\rm Ver}_{p}$ there exists a fusion subcategory
$\B \subset {\rm Ver}_{p}$ such that ${\rm Ver}_{p}=G_1(\A)\boxtimes \B$. Thus the
autoequivalence $R$ extends to autoequivalence of ${\rm Ver}_{p}$ as $R\boxtimes \id$.
Then Lemma \ref{autover} implies that $R\simeq {\rm id}$ and we are done.
\end{proof}

\subsection{Proof of Theorem \ref{ufiber}}
Let $F_1$ and $F_2$ be two symmetric tensor functors $\C \to {\rm Ver}_{p}$. Let 
$I_1, I_2$ be right adjoint functors and let $A_1=I_1(\be), A_2=I_2(\be)$ be the
corresponding commutative algebras, see Section \ref{commalg}. Then $F_2(A_1)$ is a
unital, associative, commutative algebra in ${\rm Ver}_{p}$. Thus by Proposition \ref{homto1}
there exists a unital algebra homomorphism $F_2(A_1)\to \be$. Thus we have
a unital algebra homomorphism $\tilde I_2(F_2(A_1))\to \tilde I_2(\be)=A_2$, or 
$A_1\otimes A_2\to A_2$, see Proposition \ref{falg} (iii). Composing this with the homomorphism
$A_1=A_1\otimes \be \to A_1\otimes A_2$ we get a unital (and hence non-zero) 
homomorphism $A_1\to A_2$. By Corollary \ref{noid}, this homomorphism is injective. 
By symmetry we have an injective homomorphism $A_2\to A_1$. It follows that the algebras
$A_1$ and $A_2$ are isomorphic. 

We choose an isomorphism and set $A=A_1\simeq A_2$. By Corollary \ref{compo} the functors
$F_i, i=1,2$ are isomorphic to $\tilde F_i\circ F_A$ where $\tilde F_i$ are injective tensor functors
$\C_A\to {\rm Ver}_{p}$. By Corollary \ref{injiso} the functors $\tilde F_1$ and $\tilde F_2$ are
isomorphic. Hence $F_1$ and $F_2$ are isomorphic and the proof is complete.

\section{Super Frobenius-Perron Dimension}

\subsection{Definition of ${\rm SFPdim}$} Let $\C$ be a  
symmetric fusion category over $\kk$, and $F : \C \rightarrow {\rm Ver}_{p}$
be the Verlinde fiber functor. For any $X \in \C$ we can write
$$
F(X) = F(X)^{+} \oplus F(X)^{-}
$$
where $F(X)^{+} \in {\rm Ver}_{p}^{+}$ 
and $F(X)^{-} \in {\rm Ver}_{p}^{-}$. 

\begin{definition} Let
$$
d_{+}(X) := \FPdim(F(X)^{+}), \;\; d_{-}(X) := \FPdim(F(X)^{-}),
$$
Define the super Frobenius-Perron dimension of $X$ as 
$$
{\rm SFPdim}(X) := d_{+}(X) - d_{-}(X).
$$
\end{definition} 

\begin{remark} 
1. Note that this definition makes sense in characteristic zero, if we replace
${\rm Ver}_p$ with ${\rm sVec}$, and $F$ with the usual fiber functor; 
in this case, we recover the standard notion of superdimension of a supervector space.   
Also, if $\C = {\rm sVec}$ and $F$ is the natural embedding of 
${\rm sVec}$ into ${\rm Ver}_{p}$ as the subcategory generated by $L_{1}$ and $L_{p-1}$, 
then ${\rm SFPdim}$ also agrees with the usual notion of superdimension. 
In particular, ${\rm SFPdim}={\rm FPdim}$ in characteristic $2$ 
and coincides with the usual superdimension in characteristic $3$. 

2. Below we will give another definition of ${\rm SFPdim}$ which does not use $F$. 

3. It follows from the definition that ${\rm SFPdim}(X)={\rm SFPdim}(F(X))$. 
\end{remark}

\begin{lemma} \label{character}
(i) ${\rm SFPdim}$ is a character of the Grothendieck ring of $\C$. 

(ii) For $\C = {\rm Ver}_{p}$ and $F = {\rm id}$, we have
${\rm SFPdim}=\chi_{p-1}$. Thus, ${\rm SFPdim}(X)=\chi_{p-1}(F(X))$. 
\end{lemma}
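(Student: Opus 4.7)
The plan is to verify (i) the two defining properties of a ring character (additivity, multiplicativity, and preservation of the unit), then compute $\chi_{p-1}$ explicitly on the simples $L_r$ in (ii) and compare with $\mathrm{SFPdim}$.

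For (i), additivity is immediate since $F$ is an additive functor, the splitting $F(X)=F(X)^+\oplus F(X)^-$ is functorial in $X$ (it is the projection onto each block of the Verlinde category), and $\FPdim$ is additive on ${\rm Ver}_p$. For the unit, $F(\be)=L_1\in {\rm Ver}_p^+$, so $\mathrm{SFPdim}(\be)=\FPdim(L_1)-0=1$. The core of (i) is multiplicativity. The key observation is that the decomposition ${\rm Ver}_p={\rm Ver}_p^+\oplus{\rm Ver}_p^-$ is a $\mathbb{Z}/2\mathbb{Z}$-grading of the tensor product: the Verlinde fusion rule
$$L_r\otimes L_s\cong\bigoplus_{i=1}^{\min(r,s,p-r,p-s)} L_{|r-s|+2i-1}$$
produces only summands $L_j$ with $j\equiv r+s\pmod 2$. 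Using that $F$ is a tensor functor together with this grading gives
$$F(X\otimes Y)^+=F(X)^+\otimes F(Y)^+\;\oplus\; F(X)^-\otimes F(Y)^-,$$
$$F(X\otimes Y)^-=F(X)^+\otimes F(Y)^-\;\oplus\; F(X)^-\otimes F(Y)^+.$$
Since $\FPdim$ is a ring homomorphism on $\mathrm{Gr}({\rm Ver}_p)$, one reads off
$d_\pm(X\otimes Y)$ in terms of $d_\pm(X),d_\pm(Y)$ and subtracts to obtain
$$\mathrm{SFPdim}(X\otimes Y)=(d_+(X)-d_-(X))(d_+(Y)-d_-(Y))=\mathrm{SFPdim}(X)\,\mathrm{SFPdim}(Y).$$
(For $p=2$ the category ${\rm Ver}_2^-$ is zero and the argument degenerates to $\mathrm{SFPdim}=\FPdim$, which is known to be a character.)

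For (ii) with $\C={\rm Ver}_p$ and $F=\mathrm{id}$, one first computes $\chi_{p-1}$ on the simples. Since $q=e^{\pi i/p}$ we have $q^p=-1$, hence $q^{p-1}=-q^{-1}$. A direct calculation then gives
$$\chi_{p-1}(L_r)=[r]_{q^{p-1}}=\frac{(-q^{-1})^r-(-q^{-1})^{-r}}{-q^{-1}-(-q)}=(-1)^{r+1}[r]_q=(-1)^{r+1}\FPdim(L_r).$$
On the other hand, under $F=\mathrm{id}$, the simple $L_r$ lies in ${\rm Ver}_p^+$ for $r$ odd and in ${\rm Ver}_p^-$ for $r$ even, so
$$\mathrm{SFPdim}(L_r)=\begin{cases}\FPdim(L_r)&r\text{ odd},\\ -\FPdim(L_r)&r\text{ even},\end{cases}=(-1)^{r+1}\FPdim(L_r).$$
Thus $\mathrm{SFPdim}$ and $\chi_{p-1}$ agree on a basis of $\mathrm{Gr}({\rm Ver}_p)$; since both are characters by part (i) and Lemma \ref{char:Ver}, they are equal. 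The final identity $\mathrm{SFPdim}(X)=\chi_{p-1}(F(X))$ for general $\C$ is immediate from Remark 3 combined with (ii) applied to $F(X)\in{\rm Ver}_p$.

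No genuine obstacle arises; the only substantive content is the $\mathbb{Z}/2\mathbb{Z}$-grading of ${\rm Ver}_p$ (already recorded via the Verlinde fusion rules) and the identity $q^{p-1}=-q^{-1}$ used to rewrite $[r]_{q^{p-1}}$ as a signed version of $\FPdim$.
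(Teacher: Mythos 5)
Your proof is correct, and it is close in spirit to the paper's but differs in mechanics. The paper handles (i) by first specializing to $\C={\rm Ver}_p$, writing ${\rm SFPdim}=\FPdim\boxtimes\sdim$ under the equivalence ${\rm Ver}_p\cong{\rm Ver}_p^+\boxtimes\sVec$ (so the character property is inherited from $\FPdim$ and $\sdim$), and then deducing the general case from the fact that $F$ induces a ring homomorphism ${\rm Gr}(\C)\to{\rm Gr}({\rm Ver}_p)$ together with ${\rm SFPdim}(X)={\rm SFPdim}(F(X))$; for (ii) it only evaluates at $L_2$, using that $L_2$ generates the Grothendieck ring, so the character property is essential there. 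You instead prove multiplicativity by hand directly on $\C$, using the $\Bbb Z/2\Bbb Z$-grading of the Verlinde fusion rules (which is the same structural fact as the $\boxtimes\,\sVec$ decomposition), and for (ii) you compute $\chi_{p-1}(L_r)=(-1)^{r+1}[r]_q$ on every simple; this has the small advantage that equality on a basis already follows from additivity alone, without invoking the character property. Both routes are valid; the paper's is shorter because it outsources the multiplicative structure to the external tensor decomposition, while yours is more self-contained and explicit.

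One slip to fix: the parity congruence you state for the fusion rules is off by one. From $j=|r-s|+2i-1$ one gets $j\equiv r+s-1\pmod 2$, equivalently $j-1\equiv(r-1)+(s-1)\pmod 2$, not $j\equiv r+s\pmod 2$ (e.g.\ $L_2\otimes L_2=L_1\oplus L_3$ for $p=5$). The grading statement you actually use afterwards --- that ${\rm Ver}_p^\pm\otimes{\rm Ver}_p^\pm$ lands in ${\rm Ver}_p^+$ and ${\rm Ver}_p^\pm\otimes{\rm Ver}_p^\mp$ in ${\rm Ver}_p^-$ --- is the correct one, and your displayed formulas for $F(X\otimes Y)^\pm$ and the resulting computation of $d_\pm(X\otimes Y)$ are right, so this is a misstatement rather than a gap in the argument.
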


\begin{proof} Using the equivalence
$$
{\rm Ver}_{p} \cong {\rm Ver}_{p}^{+} \boxtimes {\rm sVec},
$$
for $\C = {\rm Ver}_{p}$ and $F = {\rm id}$ we can write ${\rm SFPdim}$ as $\FPdim \boxtimes \sdim$, where $\sdim$ denotes the superdimension of a supervector space. 
Since $\FPdim$ and $\sdim$ are characters of the Grothendeick rings of ${\rm Ver}_{p}^{+}$ and ${\rm sVec}$ respectively, 
${\rm SFPdim}$ is a character of the Grothendeick ring of ${\rm Ver}_{p}$. Moreover, we have ${\rm SFPdim}(L_2)=-{\rm FPdim}(L_2)=-q-q^{-1}=q^{p-1}+q^{-p+1}=\chi_{p-1}(L_2)$,
which implies that ${\rm SFPdim}=\chi_{p-1}$ (as $L_2$ generates the Grothendieck ring of ${\rm Ver}_p$). Now the general case follows from the facts that 
${\rm SFPdim}(X)={\rm SPdim}(F(X))$ and that $F$ induces a homomorphism of Grothendieck rings.  
\end{proof}

\subsection{The second Adams operation and ${\rm SFPdim}$}

Let ${\rm char}\kk\ne 2$. For a symmetric tensor category $\C$ over $\kk$, the second Adams operation $\Psi^{2}$ is defined on the Grothendeick ring of $\C$ as 
$$
\Psi^{2}(X) = S^{2}X - \wedge^{2}X,
$$ 
the difference between the symmetric and exterior squares. 

\begin{lemma}\label{ringhomo} $\Psi^2$ is a ring homomorphism ${\rm Gr}(\C)\to {\rm Gr}(\C)$. 
\end{lemma}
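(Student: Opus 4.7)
The plan is to deduce the result from the standard decomposition of $X\otimes X$ into symmetric and exterior squares and from the Künneth-type formulas for $S^2$ and $\wedge^2$ applied to direct sums and tensor products. Since $\mathrm{char}\,\kk\ne 2$, the idempotents $\tfrac{1}{2}(\id\pm c_{X,X})$ on $X\otimes X$ are well defined and yield the canonical decomposition $X\otimes X=S^2X\oplus\wedge^2X$, so $[X]^2=[S^2X]+[\wedge^2X]$ in $\mathrm{Gr}(\C)$.

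First I would verify additivity. For $X,Y\in\C$, the standard decompositions
\[
S^2(X\oplus Y)\cong S^2X\oplus (X\otimes Y)\oplus S^2Y,\qquad \wedge^2(X\oplus Y)\cong \wedge^2X\oplus (X\otimes Y)\oplus \wedge^2Y
\]
(which hold in any symmetric tensor category in characteristic $\ne 2$) give, after subtraction, $\Psi^2(X\oplus Y)=\Psi^2(X)+\Psi^2(Y)$. On the unit object, $S^2\be=\be$ and $\wedge^2\be=0$, so $\Psi^2(\be)=\be$.

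Next I would prove multiplicativity, which is the main content. The symmetry isomorphism identifies $(X\otimes Y)^{\otimes 2}\cong X^{\otimes 2}\otimes Y^{\otimes 2}$ equivariantly, where the flip on the left corresponds to the simultaneous flip on both factors on the right. Under this identification, the $(+1)$-eigenspace of the flip on $(X\otimes Y)^{\otimes 2}$ decomposes according to the signs of the two component flips, yielding
\[
S^2(X\otimes Y)\cong (S^2X\otimes S^2Y)\oplus(\wedge^2X\otimes\wedge^2Y),
\]
\[
\wedge^2(X\otimes Y)\cong (S^2X\otimes\wedge^2Y)\oplus(\wedge^2X\otimes S^2Y).
\]
Subtracting these in the Grothendieck ring gives
\[
\Psi^2(X\otimes Y)=(S^2X-\wedge^2X)(S^2Y-\wedge^2Y)=\Psi^2(X)\Psi^2(Y).
\]

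The main (minor) obstacle is justifying the two tensor-product decompositions above; everything else is formal. In characteristic zero they follow from the representation theory of $S_2\times S_2$ acting on $X^{\otimes 2}\otimes Y^{\otimes 2}$, and the same argument works in characteristic $\ne 2$ because the two idempotents $\tfrac{1}{2}(\id\pm c)$ remain well defined and orthogonal, so the $S_2\times S_2$-isotypic decomposition of $X^{\otimes 2}\otimes Y^{\otimes 2}$ is defined categorically and matches the flip-isotypic decomposition of $(X\otimes Y)^{\otimes 2}$ under the canonical braiding isomorphism. Combining additivity and multiplicativity with $\Psi^2(\be)=\be$ proves that $\Psi^2$ is a ring endomorphism of $\mathrm{Gr}(\C)$.
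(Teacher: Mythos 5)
Your proof is correct and follows essentially the same route as the paper: additivity from the standard direct-sum decompositions of $S^2$ and $\wedge^2$, and multiplicativity from the decompositions $S^2(X\otimes Y)\cong S^2X\otimes S^2Y\oplus\wedge^2X\otimes\wedge^2Y$ and $\wedge^2(X\otimes Y)\cong S^2X\otimes\wedge^2Y\oplus\wedge^2X\otimes S^2Y$. Your extra justification via the idempotents $\tfrac12(\id\pm c)$ in characteristic $\ne 2$ is a fine way of making precise what the paper leaves implicit.
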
 

\begin{proof} We have $S^2(X\oplus Y)=S^2X\oplus X\otimes Y\oplus S^2Y$ and 
$\wedge^2(X\oplus Y)=\wedge^2X\oplus X\otimes Y\oplus \wedge^2Y$, which implies 
that $\Psi^2$ is a group homomorphism. 

We have $S^2(X\otimes Y)=S^2X\otimes S^2Y\oplus \wedge^2X\otimes \wedge^2Y$
and $\wedge^2(X\otimes Y)=S^2X\otimes \wedge^2Y\oplus \wedge^2X\otimes S^2Y$, 
which implies that $\Psi^2$ is multiplicative. 
\end{proof} 

Now we give another definition of ${\rm SFPdim}$ which does not use the functor $F$. 

\begin{proposition} Let $\C$ be a fusion category over $\kk$, and $X \in \C$. Then,
\begin{equation}\label{equa}
{\rm SFPdim}(X) = g(\FPdim(\Psi^{2}(X)))
\end{equation}
where $g : \Q(q) \rightarrow \Q(q)$ 
is the Galois automorphism defined by sending $q^{2}$ to $-q$ (note that $\Q(q) = \Q(q^{2})$).  
\end{proposition}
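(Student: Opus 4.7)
My strategy is to show both sides of~\eqref{equa} define characters of $\mathrm{Gr}(\C)$, reduce to $\C={\rm Ver}_p$ via the Verlinde fiber functor, and then verify the identity on a single generator.

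To see that both sides are algebra homomorphisms $\mathrm{Gr}(\C)\to \mathbb{Q}(q)$: the left-hand side is a character by Lemma~\ref{character}(i); for the right-hand side, $\Psi^2$ is a ring endomorphism by Lemma~\ref{ringhomo}, $\FPdim$ is a character, and $g$ is a field automorphism, so their composition is a character as well. Moreover both factor through $F:\C\to{\rm Ver}_p$: this is Lemma~\ref{character}(ii) for the left-hand side, and for the right-hand side it follows from the fact that $F$ is a symmetric tensor functor and hence commutes with the formation of $S^2$ and $\wedge^2$ (so $F(\Psi^2(X))=\Psi^2(F(X))$) and preserves $\FPdim$ (the corollary at the end of Section~\ref{vff}). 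Thus it suffices to prove \eqref{equa} for $\C={\rm Ver}_p$.

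In ${\rm Ver}_p$, the object $L_2$ generates the Grothendieck ring as a ring (as used in the proof of Lemma~\ref{character}), so verifying two algebra homomorphisms at $X=L_2$ suffices. Using $q^p=-1$, the left-hand side is
\[
\chi_{p-1}(L_2)=q^{p-1}+q^{-(p-1)}=-q^{-1}-q.
\]
For the right-hand side, the Verlinde fusion rules together with the description of $S^2L_2$ and $\wedge^2L_2$ (namely $S^2L_2=L_3$ and $\wedge^2L_2=L_1$ for $p>3$, and $S^2L_2=0$, $\wedge^2L_2=L_1$ for $p=3$) give $\chi_1(\Psi^2(L_2))=q^2+q^{-2}$ uniformly. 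Applying $g$ yields $(-q)+(-q)^{-1}=-q-q^{-1}$, matching the left-hand side.

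I do not expect a serious obstacle: the character-theoretic reduction to ${\rm Ver}_p$ does the work, after which the identity becomes a direct one-element computation on the generator $L_2$.
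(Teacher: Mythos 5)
Your proposal is correct and follows essentially the same route as the paper: reduce to $\C={\rm Ver}_p$ via the Verlinde fiber functor (which commutes with $S^2$, $\wedge^2$ and preserves $\FPdim$), note both sides of \eqref{equa} are characters of the Grothendieck ring, and verify the identity on the tensor generator $L_2$, with the same computation $g(q^2+q^{-2})=-q-q^{-1}$ (and the $p=3$ case handled consistently, matching the paper's separate check that both sides equal $-1$).
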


\begin{proof} Let $F$ be the Verlinde fiber functor $\C \rightarrow {\rm Ver}_{p}$. Then ${\rm SFPdim}(X) = {\rm SFPdim}(F(X))$. Also, since $F$ preserves the symmetric structure and the FP-dimension, $g(\FPdim(\Psi^{2}(X))) = g(\FPdim(\Psi^{2}(F(X)))).$ Hence, without loss of generality, we can assume that $\C = {\rm Ver}_{p}$ and $F={\rm id}$. 

By Lemma \ref{ringhomo} and Lemma \ref{character}, 
both sides of \eqref{equa} are characters on the complexified Grothendieck ring 
of ${\rm Ver}_{p}$. Hence, as $L_{2}$ tensor generates ${\rm Ver}_{p}$, it suffices to check the equality on $L_{2}$. In this case, if $p \ge 5$, the LHS is by definition $-q-q^{-1}$ and the RHS of \eqref{equa} is 
$$
g(\FPdim(L_{3}-L_{1})) = g([3]_{q} - 1) = g(q^{2} + q^{-2}) = -q-q^{-1},
$$
as desired. For $p = 3$, on both sides of \eqref{equa} we have $-1$ (here $g$ is the identity and $L_{3}=0$).
\end{proof}

\subsection{Symmetric and braided categorifications of fusion rings of rank 2} 

Here is an application of the second Adams operation. Let $R_n$ be the fusion ring of rank $2$ 
with basis $1,X$ and relation $X^2=nX+1$, where $n\ge 0$. 

\begin{proposition}\label{classifi} Let $n\ge 1$. There is a symmetric fusion category $\C$ with Grothendieck ring $R_n$ only 
if $n=1$ and ${\rm char}\kk=5$, and in this case  $\C={\rm Ver}_5^+$. 
\end{proposition}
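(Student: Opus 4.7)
The plan is to apply the Verlinde fiber functor $F:\C\to\text{Ver}_p$, use the second Adams operation $\Psi^2$ to force $n=1$, then use the Galois theory of real cyclotomic fields to force $p=5$, and finally identify $\C$ with $\text{Ver}_5^+$. The case $\text{char}\,\kk = 2$ is disposed of first: here $\text{Ver}_2 = \Vec$, so $\FPdim(X) = \dim F(X) \in \Z$, but $\FPdim(X) = (n+\sqrt{n^2+4})/2 \notin \Z$ for $n \ge 1$. So we may assume $p$ is odd.

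Since $\mathbf{1}$ appears in $X \otimes X = nX + \mathbf{1}$, the object $X$ is self-dual, and exactly one of $S^2X, \wedge^2X$ contains $\mathbf{1}$. Thus we can write $S^2X = aX+b$ and $\wedge^2X = (n-a)X + (1-b)$ with $b\in\{0,1\}$ and $0\le a\le n$, giving
\[
\Psi^2(X) \;=\; S^2X - \wedge^2X \;=\; (2a-n)X + (2b-1),
\]
whose constant term is $\pm 1$. On the other hand, by Lemma \ref{ringhomo}, $\Psi^2$ is a ring endomorphism of $\Gr(\C) = R_n$, so $\Psi^2(X)$ satisfies $y^2 = ny + 1$ in $R_n$. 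Writing $y = AX+B$ and expanding, the resulting conditions $A(An+2B-n)=0$ and $A^2+B^2 = nB+1$ quickly give $(A,B)\in\{(1,0),(-1,n)\}$, i.e.\ $\Psi^2(X) \in \{X, n-X\}$. Matching the $\pm 1$ constant term against $0$ or $n$ forces $n=1$, $a=0$, $b=1$; hence $S^2X = \mathbf{1}$, $\wedge^2X = X$, and $\Psi^2(X) = 1-X$.

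Now $\FPdim(X) = \phi := (1+\sqrt{5})/2$, and by the corollary at the end of \S\ref{vff}, $\FPdim(X) \in K := \Q(\zeta_p + \zeta_p^{-1})$, so $\Q(\sqrt{5}) = \Q(\phi) \subseteq K$. But $\Gal(K/\Q)$ is cyclic of order $(p-1)/2$, so $K$ has a quadratic subfield if and only if $p \equiv 1 \pmod 4$, in which case this unique subfield is $\Q(\sqrt{p})$. Comparing forces $p = 5$.

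Finally, in $\text{Ver}_5$ the simples $L_1, L_2, L_3, L_4$ have FP-dimensions $1, \phi, \phi, 1$, so $\FPdim F(X) = \phi$ gives $F(X) \in \{L_2, L_3\}$. The relation $F(X) \otimes F(X) = F(X) \oplus \mathbf{1}$ rules out $L_2$ (since $L_2 \otimes L_2 = L_1 \oplus L_3 \neq L_1 \oplus L_2$), leaving $F(X) = L_3$. Thus $F$ sends the two simples of $\C$ bijectively to the two simples $L_1, L_3$ of $\text{Ver}_5^+$, and a $\kk$-linear tensor functor between semisimple categories with this property is automatically fully faithful, giving a tensor equivalence $\C \simeq \text{Ver}_5^+$. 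The main obstacle is the Adams-operation step: the clash between the ring-theoretic constraint $\Psi^2(X) \in \{X,n-X\}$ and the rigid constant-term constraint $\pm 1$ is what pins down $n=1$.
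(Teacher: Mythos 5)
Your proof is correct, and while the reduction to $n=1$ is the same as the paper's (via Lemma \ref{ringhomo}: $\Psi^2(X)\in\{X,\,n-X\}$ because $\Psi^2$ is a ring endomorphism of $R_n$, against the constant term $\pm1$ coming from the unique copy of $\mathbf 1$ sitting in exactly one of $S^2X$, $\wedge^2X$), your route to ${\rm char}\,\kk=5$ is genuinely different. The paper argues that $\C$ must be \emph{degenerate} --- otherwise it would lift to a symmetric fusion category in characteristic zero (\cite{EGNO}, 9.16) and have integer FP dimensions by Deligne's theorem --- and then observes that $\dim(\C)=\frac{5\pm\sqrt5}{2}$ vanishes only in characteristic $5$. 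You instead use the arithmetic constraint recorded in the Corollary of Subsection \ref{vff}, namely $\FPdim(X)\in\Z[q]\cap K$, so that $\Q(\sqrt5)\subseteq K=\Q(q)\cap\R=\Q(\zeta_p)^+$; since $\Gal(K/\Q)$ is cyclic of order $(p-1)/2$, the unique quadratic subfield (which exists only for $p\equiv1\pmod4$) is $\Q(\sqrt p)$, forcing $p=5$. Both arguments ultimately rest on the existence of the Verlinde fiber functor, but yours replaces the lifting-plus-degeneracy step by cyclotomic Galois theory applied to a result already proved earlier in the paper, which is arguably more self-contained here; the paper's degeneracy argument has the advantage of being the template reused for the braided rank-two classification (Proposition \ref{nobra}). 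Your concluding identification $F(X)=L_3$ and $\C\simeq{\rm Ver}_5^+$ is just a fleshed-out version of the paper's ``easily seen'' step (one should note in passing that $\FPdim F(X)=\frac{1+\sqrt5}{2}<2$ forces $F(X)$ to be simple, and that tensor functors are automatically faithful, so matching Hom dimensions gives full faithfulness); this is fine.
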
 

\begin{proof} If ${\rm char}\kk=2$ then there is a Verlinde fiber functor $F: \C\to {\rm Vec}$, so 
$\FPdim(X)$ must be an integer, i.e. the equation $x^2=nx+1$ should have integer solutions, which is 
impossible. So we may assume that ${\rm char}\kk\ne 2$. We have a homomorphism
$\Psi^2: R_n\to R_n$, so $\Psi^2(X)=X$ or $\Psi^2(X)=n-X$. But the multiplicity of $1$ 
in $\Psi^2(X)$ is $1$ or $-1$, as either $S^2X$ or $\wedge^2X$ contains 
a copy of the unit object $\bold 1$ (but not both). So $n=1$ and $\FPdim(X)=\frac{1+\sqrt{5}}{2}$. 
Also, $\C$ must be a degenerate category (otherwise, it would lift to a symmetric category in characteristic zero by \cite{EGNO}, Subsection 9.16, and hence would have had integer 
FP dimensions, see \cite{EGNO}, Theorem 9.9.26), so ${\rm char}\kk=5$ 
(as $\dim(\C)=\frac{5\pm \sqrt{5}}{2}$, which vanishes only in characteristic 
$5$). Thus we have a Verlinde fiber functor $\C\to {\rm Ver}_5$, which is easily seen to be an equivalence onto ${\rm Ver}_5^+$.  
\end{proof} 

Note that Proposition \ref{classifi} completely classifies symmetric categorifications of fusion rings of rank $2$, 
as the problem of categorification of the ring $R_0$ is trivial (since $X$ is invertible). 

The main result of \cite{O2} states that in characteristic zero there is no fusion categories with Grothendieck ring $R_n$, $n\ge 2$. It is an open question whether this is true in positive
characteristic. However we have the following

\begin{proposition}\label{nobra} There is no braided fusion categories $\C$ over $\kk$ with Grothendieck ring
$R_n$, $n\ge 2$. 
\end{proposition}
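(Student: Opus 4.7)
The plan is to reduce the problem in two steps: first cut down to the nondegenerate case using the M\"uger (symmetric) center, then lift to characteristic zero and invoke Ostrik's classification of fusion categories of rank $2$.

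The starting observation is that $R_n$ admits exactly two based subrings, namely $\Z\cdot\mathbf{1}$ and $R_n$ itself, since $\{1\}$ is the only proper nonempty subset of the basis $\{1,X\}$ closed under multiplication. Assume for contradiction that $\C$ is a braided fusion category over $\kk$ with $\Gr(\C)\cong R_n$ for some $n\ge 2$, and let $\C'\subset\C$ be the M\"uger center, which is a symmetric fusion subcategory. Its Grothendieck ring is a based subring of $R_n$, so either $\C'=\C$ or $\C'=\Vec$. In the first case $\C$ itself is a symmetric categorification of $R_n$ with $n\ge 2$, directly contradicting Proposition \ref{classifi}. Hence $\C$ must be nondegenerate.

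In the nondegenerate case the global dimension $\dim(\C)\in\kk$ is nonzero, so the lifting theory summarized in \cite[Subsection 9.16]{EGNO} applies and produces a nondegenerate braided fusion category $\tilde\C$ in characteristic zero having the same Grothendieck ring $R_n$. But the main result of \cite{O2} says that no fusion category in characteristic zero has Grothendieck ring $R_n$ for $n\ge 2$, giving the desired contradiction.

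The step I expect to be the main obstacle is the lifting: one needs both that nondegeneracy of the braiding in positive characteristic forces the global dimension to be nonzero (so that the lifting machinery applies at all), and that the lifting procedure carries along the braided structure and preserves the fusion ring. Both are available in the references cited, but one should verify that the lifting statements of \cite[Subsection 9.16]{EGNO} are used in their braided, rather than merely fusion-categorical, form.
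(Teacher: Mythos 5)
Your reduction via the M\"uger center breaks down at the key step: you conflate two different notions of nondegeneracy. The lifting theory of \cite[9.16]{EGNO} applies to fusion categories that are nondegenerate in the sense of having \emph{nonzero global dimension} in $\kk$, whereas triviality of the M\"uger center is a property of the braiding, and in characteristic $p$ it is not a cited or standard fact that the latter implies the former. Concretely, for $\Gr(\C)=R_n$ the global dimension is $1+d^2$, where $d\in\kk$ is the categorical dimension of $X$, and nothing in your argument rules out $1+d^2=0$ (equivalently $nd=-2$, which can happen whenever $p$ divides $n^2+4$). In that degenerate case the lifting machinery does not apply at all, so your second case is left open --- and it is precisely the hard case.

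The paper's proof takes a different route through exactly this case: it first notes (as in \cite[Corollary 2.2]{O2}) that $\C$ is spherical, and uses lifting only to conclude that $\C$ must be degenerate, i.e. $1+d^2=0$, $nd=-2$. Then \cite[Proposition 2.9]{O} shows $\Gr(\C)\otimes\kk$ is not semisimple, so $X\mapsto d$ is the unique character $R_n\to\kk$, forcing $s_{X,X}/d=d$; combined with $s_{X,X}=\theta^{-2}(1+\theta nd)$ this yields $\theta=1$, hence $\C$ is symmetric, and Proposition \ref{classifi} finishes the argument. So the implication ``trivial M\"uger center $\Rightarrow$ nonzero global dimension'' is, in this rank-two situation, a \emph{consequence} of that computation, not something you may assume. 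Your first case (M\"uger center equal to $\C$) is fine and coincides with the paper's endgame, and your worry about lifting the braiding is actually immaterial (lifting the underlying fusion category suffices, since \cite{O2} excludes all fusion categorifications of $R_n$ in characteristic zero); but to complete your route you would need an independent proof that a braided fusion category over $\kk$ with trivial M\"uger center has nonzero global dimension, which is not available.
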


\begin{proof}
For the sake of contradiction assume that $\C$ is a braided fusion category with 
${\rm Gr}(\C)=R_n$, $n\ge 2$.
Then the same argument as in proof of \cite[Corollary 2.2]{O2} shows that $\C$ is spherical. 
The lifting theory (see \cite[9.16]{EGNO})
shows that $\C$ must be degenerate, that is $1+d^2=0$ where $d\in \kk$ is the dimension
of the object $X$ (as by the result of \cite{O2}, there is no categorifications of $R_n$ in characteristic zero).  Since $d^2=1+nd$, this is equivalent to $nd=-2$. 
Moreover by \cite[Proposition 2.9]{O} the ring ${\rm Gr}(\C)\otimes \kk$ must not be semisimple,
so $X\mapsto d$ is a unique homomorphism $R_n\to \kk$. Thus $\frac{s_{X,X}}{d}=d$ where
$s_{X,X}$ is the entry of $S-$matrix of $\C$, see \cite[Proposition 8.3.11]{EGNO}.
By \cite[Proposition 8.13.8]{EGNO} we have $s_{X,X}=\theta^{-2}(1+\theta nd)$ where
$\theta$ is the twist of the object $X$, see \cite[Definition 8.10.1]{EGNO}. Thus
$$\frac{s_{X,X}}{d}=d\Leftrightarrow \theta^{-2}(1+\theta nd)=d^2\Leftrightarrow 
\theta^{-2}(1-2\theta)=-1\Leftrightarrow \theta =1.$$
It follows (see \cite[Definition 8.10.1]{EGNO}) that the category $\C$ is symmetric, and the
result follows from Proposition \ref{classifi}.
\end{proof}

\begin{remark} The argument in the proof of Proposition \ref{nobra} works also in the case $n=1$, 
showing that if $\C$ is degenerate then it must be symmetric, and hence by Proposition \ref{classifi} 
has to be equivalent to ${\rm Ver}_5^+$ with ${\rm char}\kk=5$. Another option for $n=1$ 
is that $\C$ is nondegenerate; then by lifting theory (\cite[9.16]{EGNO}) it is a reduction modulo $p={\rm char}\kk$ of a Yang-Lee category $YL_+$ or $YL_-$ in characteristic zero (\cite[Exercise 9.4.6]{EGNO}), 
and each can be taken with two different braidings (inverse to each other). Thus we have four choices 
for each characteristic $p\ne 5$ (which are distinct since lifting is faithful). 

Also, the case $n=0$ is easy since $X$ is invertible. Namely, in this case, if $p=2$, then 
$\C={\rm Vec}_{\Bbb Z/2\Bbb Z}$ ($\Bbb Z/2\Bbb Z$-graded vector spaces), and the only possible braiding is trivial. On the other hand, for $p>2$ the classification is the same as in characteristic zero,
namely by quadratic forms on $\Bbb Z/2\Bbb Z$, which are labeled by solutions of the equation $\theta^4=1$ in $\kk$ (\cite[8.2]{EGNO}). Thus, we have obtained a full classification of braided fusion categories 
with two simple objects over a field of any characteristic.  
\end{remark} 

\section{Decomposition of the Verlinde fiber functor of an object of a symmetric fusion category}

\subsection{The decomposition formulas}
Let $\C$ be a symmetric fusion category over $\kk$ and let $F: \C \rightarrow {\rm Ver}_{p}$ be the Verlinde fiber functor. Let $X \in \C$. Our goal in this section is to compute the decomposition of $F(X)$ into the simples of ${\rm Ver}_{p}$ in terms of $\FPdim(X)$ and ${\rm SFPdim}(X)$. Let
$$
F(X) = \bigoplus_{r=1}^{p-1} a_{r}L_{r}.
$$
If $p=2$ then $a_1=\FPdim(X)$, so assume that $p>2$. 

Let ${\rm Tr}: K\to \Bbb Q$ be the trace map, given by ${\rm Tr}(x)=\sum_{s=0}^{\frac{p-3}{2}}g_s(x)$. 
For example, for $r$ not divisible by $p$, we have 
\begin{equation}\label{tracefor}
{\rm Tr}(q^r+q^{-r})=(-1)^{r-1}.
\end{equation} 

\begin{theorem} \label{decomposition}
For $p>2$ we have
$$
a_{r} = \frac{1}{2p} {\rm Tr}\left((q^{-r}-q^r)(q - q^{-1}) (\FPdim(X) - (-1)^{r}{\rm SFPdim}(X))\right).
$$
In other words, for $r$ odd, we have 
$$
a_{r} = \frac{1}{p}{\rm Tr}((q^{-r}-q^r)(q - q^{-1})d_{+}(X))
$$
and for $r$ even, we have
$$
a_{r} = \frac{1}{p} {\rm Tr}((q^{-r}-q^r)(q - q^{-1})d_{-}(X)).
$$
\end{theorem}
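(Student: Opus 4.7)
The plan is to reduce to the case $\C = {\rm Ver}_p$ with $F = \id$ and then verify the formula on each simple $L_s$.

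First I would argue that both sides of the asserted identity are linear in the class $[F(X)] \in \Gr({\rm Ver}_p)$: the left-hand side $a_r$ is the multiplicity $[F(X):L_r]$, and the right-hand side depends on $X$ only through $\FPdim(X)$ and $\SFPdim(X)$, which by Lemma \ref{character}(i) are characters of $\Gr(\C)$ and hence factor through $F$. Thus it suffices to treat the case $\C = {\rm Ver}_p$, $F = \id$, $X = L_s$ with $s \in \{1, \ldots, p-1\}$.

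Next I would evaluate both sides on $L_s$. The left side is $\delta_{rs}$. For the right side, Lemma \ref{character}(ii) together with the identity $q^{p-1} = -q^{-1}$ yields $\SFPdim(L_s) = \chi_{p-1}(L_s) = (-1)^{s-1}[s]_q$, while $\FPdim(L_s) = [s]_q$. Hence
\[
\FPdim(L_s) - (-1)^r \SFPdim(L_s) = [s]_q\bigl(1 + (-1)^{r+s}\bigr),
\]
which vanishes when $r, s$ have opposite parity (matching $\delta_{rs} = 0$) and equals $2[s]_q$ when they share parity.

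In the latter case, using $(q - q^{-1})[s]_q = q^s - q^{-s}$, the right-hand side of the theorem reduces to
\[
\frac{1}{p}\,{\rm Tr}\bigl((q^{-r}-q^r)(q^s - q^{-s})\bigr) = \frac{1}{p}\bigl(\Sigma(s-r) - \Sigma(s+r)\bigr),
\]
where $\Sigma(k) := {\rm Tr}(q^k + q^{-k})$. I would finish by invoking \eqref{tracefor} for $p \nmid k$, together with the boundary values $\Sigma(0) = p-1$ and $\Sigma(p) = -(p-1)$ (immediate from $q^p = -1$). The case $r = s$ gives $\Sigma(0) - \Sigma(2s) = p$; for $r \neq s$ of the same parity, both $s \pm r$ are even, nonzero, and not divisible by $p$ (since $p$ is odd while $s \pm r$ is even), so $\Sigma(s-r) = \Sigma(s+r) = -1$ and the difference vanishes. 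Division by $p$ produces $\delta_{rs}$ in both subcases.

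The main obstacle is only the careful parity-and-divisibility case analysis in this final root-of-unity sum; the conceptual content is the recognition, via Lemma \ref{character}, that $\FPdim$ and $\SFPdim$ together encode enough of the character table $([r]_{q^j})$ of $\Gr({\rm Ver}_p)$ to invert it using the trace formula \eqref{tracefor}.
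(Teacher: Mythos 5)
Your argument is correct. The linearity reduction is sound: both sides of the identity depend only on the class $[F(X)]\in \mathrm{Gr}({\rm Ver}_p)$ and are additive in it (the right side because $\FPdim$ and $\SFPdim$ factor through $F$ as the characters $\chi_1$ and $\chi_{p-1}$, by Lemma \ref{character}), so checking on the simples $L_s$ suffices. Your evaluation on $L_s$ is also right: $\SFPdim(L_s)=(-1)^{s-1}[s]_q$, the parity factor $1+(-1)^{r+s}$ kills the opposite-parity case, and in the same-parity case the trace computation via $\Sigma(k)={\rm Tr}(q^k+q^{-k})$ works out, since $2s$ and $s\pm r$ lie strictly between $0$ and $2p$ and are even, hence never divisible by the odd prime $p$ when $r\neq s$, giving $\Sigma(s-r)-\Sigma(s+r)=0$, while $r=s$ gives $\Sigma(0)-\Sigma(2s)=(p-1)+1=p$. (You should make explicit that the range bound $s+r\le 2p-2$ is what excludes divisibility by $2p$; the parity remark alone only excludes $s+r=p$. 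Also note the ``in other words'' reformulations follow at once from $\FPdim=d_++d_-$ and $\SFPdim=d_+-d_-$.)

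Your route differs in organization from the paper's. The paper keeps $X$ general: it applies all Galois conjugates $g_s$ to the two character identities $\FPdim(X)=\sum_r a_r[r]_q$ and $\SFPdim(X)=\sum_r(-1)^{r-1}a_r[r]_q$, extends $a_r$ antisymmetrically to $\mathbb{Z}/2p\mathbb{Z}$, and recovers the $a_r$ by an inverse Fourier transform on $\mathbb{Z}/2p\mathbb{Z}$ --- a derivation that explains where the formula comes from. You instead take the formula as given and verify it on the basis of simples, where the Fourier inversion collapses to the orthogonality-type identity $\Sigma(s-r)-\Sigma(s+r)=p\,\delta_{rs}$ computed from \eqref{tracefor}. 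The two arguments rest on the same root-of-unity cancellation, but yours is shorter and more elementary (no bookkeeping with the extended sequence $a_r$ or with the shift by $p$ encoding the sign $(-1)^{r-1}$), at the cost of being purely a verification rather than a derivation.
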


\begin{proof} Applying $g_s\circ \FPdim$ and $g_s\circ \SFPdim$ to the decomposition of $F(X)$, we get, for each $s = 0, \ldots, \frac{p-3}{2}$:
$$
g_{s}(\FPdim(X)) = \sum_{r=1}^{p-1} a_{r}[r]_{q^{2s + 1}}
$$
and
$$
g_{s}({\rm SFPdim}(X)) = \sum_{r=1}^{p-1} (-1)^{r-1} a_{r}[r]_{q^{2s + 1}}.
$$
Clearing denominators, we get 
$$
g_{s}((q-q^{-1})\FPdim(X)) = \sum_{r=1}^{p-1} a_{r}(q^{(2s + 1)r}-q^{-(2s+1)r})
$$
and
$$
g_{s}((q-q^{-1}){\rm SFPdim}(X)) = \sum_{r=1}^{p-1} (-1)^{r-1} a_{r}(q^{(2s + 1)r}-q^{-(2s+1)r}).
$$
Let us extend the sequence $a_r$ to all integer $r$ by setting $a_{r} = -a_{-r} = a_{r + 2p}$ 
(in particular, $a_r=0$ if $r$ is divisible by $p$). Then we can rewrite the above sums as summations over 
$\mathbb{Z}/2p\mathbb{Z}$:
$$
g_{s}((q - q^{-1})\FPdim(X)) = \pm \sum_{r \in {\Bbb Z}/2p{\Bbb Z}} a_{r}q^{\pm (2s+1)r}
$$
and
$$
-g_{s}((q - q^{-1}){\rm SFPdim}(X)) = \pm \sum_{r \in {\Bbb Z}/2p{\Bbb Z}} a_{r}q^{\pm (p + 2s + 1)r}.
$$
Now taking the inverse Fourier transform on the group ${\Bbb Z}/2p{\Bbb Z}$ gives the desired result. 
\end{proof}

\subsection{Transcendence degrees} 
Let $X$ be an object of a fusion category $\C$ over $\kk$. 
Recall from \cite{Ven} that the algebra of invariants $(SX)^{\rm inv}$ 
(which is an ordinary commutative $\kk$-algebra) is finitely generated. Let ${\rm Trd}_+(X)$ be the transcendence degree of this algebra, 
i.e. the largest number of algebraically independent elements. Similarly, let ${\rm Trd}_-(X)$ be the transcendence degree of $(\wedge X)_{\rm even}^{\rm inv}$ 
(the largest number of even algebraically independent elements). 
It is shown in \cite{Ven} that ${\rm Trd}_+(X)=d$ if and only if 
$SX$ is a finitely generated module over $\kk[z_1,...,z_d]\subset (SX)^{\rm inv}$, 
and likewise ${\rm Trd}_-(X)=d$ if and only if $\wedge X$ 
 is a finitely generated module over $\kk[z_1,...,z_d]\subset (\wedge X)_{\rm even}^{\rm inv}$. 
This implies that ${\rm Trd}_\pm(X)$ are preserved under symmetric tensor functors
between fusion categories.

It follows from Proposition \ref{prop31} that ${\rm Trd}_+(L_i)=0$ if $2\le i\le p-1$ and   
${\rm Trd}_-(L_i)=0$ if $1\le i\le p-2$, while ${\rm Trd}_+(L_1)={\rm Trd}_-(L_{p-1})=1$.  
Therefore, we have 

\begin{proposition}\label{tradeg} 
(i) One has ${\rm Trd}_+(X)=a_1$ and ${\rm Trd}_-(X)=a_{p-1}$. 

(ii) One has  
$$
{\rm Trd}_+(X) = \frac{1}{p}{\rm Tr}(|q - q^{-1}|^2d_{+}(X)),
$$
and 
$$
{\rm Trd}_-(X) = \frac{1}{p} {\rm Tr}(|q - q^{-1}|^2d_{-}(X)).
$$
\end{proposition}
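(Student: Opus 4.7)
The plan is to reduce part (i) to a computation inside ${\rm Ver}_p$ via the Verlinde fiber functor $F$, and then obtain part (ii) by direct substitution into Theorem \ref{decomposition}. Since ${\rm Trd}_\pm$ is preserved by symmetric tensor functors between fusion categories, it suffices to compute ${\rm Trd}_\pm(F(X))$ for $F(X) = \bigoplus_{r=1}^{p-1} a_r L_r$, after which one invokes the values ${\rm Trd}_+(L_r) = \delta_{r,1}$ and ${\rm Trd}_-(L_r) = \delta_{r,p-1}$ recorded in the text preceding the statement (these are immediate consequences of Proposition \ref{prop31}).

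The main step is to establish additivity of ${\rm Trd}_\pm$ over direct sums. For the upper bound, given polynomial subrings $R_i = \kk[z_1^{(i)}, \dots, z_{d_i}^{(i)}] \subset (SY_i)^{\rm inv}$ of Krull dimension $d_i = {\rm Trd}_+(Y_i)$ over which $SY_i$ is finitely generated, the tensor product $R_1 \otimes R_2$ is a polynomial subring of $(S(Y_1 \oplus Y_2))^{\rm inv}$ over which $S(Y_1 \oplus Y_2) = SY_1 \otimes SY_2$ is finitely generated, giving ${\rm Trd}_+(Y_1 \oplus Y_2) \le d_1 + d_2$. The opposite inequality follows from the observation that elements of $R_1 \otimes 1$ and $1 \otimes R_2$ remain algebraically independent in the tensor product. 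The same argument handles ${\rm Trd}_-$. Granting additivity, part (i) follows at once by summing over the simple constituents of $F(X)$.

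For part (ii), substitute $r = 1$ and $r = p - 1$ into Theorem \ref{decomposition}. Using $q^{-1} = \bar q$, one computes
\[ (q^{-1} - q)(q - q^{-1}) = -(q - q^{-1})^2 = |q - q^{-1}|^2, \]
which yields the formula for ${\rm Trd}_+$ directly from the $r = 1$ (odd) case. For $r = p - 1$ (even), the identity $q^p = -1$ gives $q^{p-1} = -q^{-1}$, so
\[ (q^{-(p-1)} - q^{p-1})(q - q^{-1}) = (q^{-1} - q)(q - q^{-1}) = |q - q^{-1}|^2 \]
as well, yielding the formula for ${\rm Trd}_-$. The main subtlety is the additivity argument in part (i); after that, the rest consists of elementary root-of-unity manipulations together with direct application of Theorem \ref{decomposition}.
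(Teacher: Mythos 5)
Your proposal is correct and follows essentially the same route as the paper: reduce to $F(X)\in{\rm Ver}_p$ using the invariance of ${\rm Trd}_\pm$ under symmetric tensor functors together with the values ${\rm Trd}_+(L_r)=\delta_{r,1}$, ${\rm Trd}_-(L_r)=\delta_{r,p-1}$, and then obtain (ii) by substituting $r=1,p-1$ into Theorem \ref{decomposition}. The only difference is that you spell out the additivity of ${\rm Trd}_\pm$ over direct sums (via $S(Y_1\oplus Y_2)=SY_1\otimes SY_2$ and $R_1\otimes R_2$), a step the paper treats as immediate; your root-of-unity simplifications $(q^{-1}-q)(q-q^{-1})=|q-q^{-1}|^2$ and $q^{p-1}=-q^{-1}$ are exactly what the plug-in requires.
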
 

\begin{proof}
(i) is immediate from the above, and (ii) follows from (i) and Theorem \ref{decomposition} by plugging in $r=1,p-1$.   
\end{proof} 

\section{Decomposition of symmetric powers of simple objects in ${\rm Ver}_{p}$}

We will now apply Theorem \ref{decomposition}  to compute the decomposition 
into simples of $S^iL_{m}$, the symmetric powers of the simple object $L_{m} \in {\rm Ver}_{p}$, for each $ 2\le m\le p-1$ and $0\le i\le p-m$ 
(note that by Proposition \ref{prop31},  we have $S^iL_m=0$ if $i>p-m$). To do so, we need to compute the $\FPdim$ and ${\rm SFPdim}$ of $S^{i}L_{m}$.  

As before, we assume that $p>2$. Let 
$$
\binom{n}{m}_{z}:=\frac{\prod_{j=1}^m (z^{n-j+1}-z^{-n+j-1})}{\prod_{j=1}^m (z^j-z^{-j})}
$$
be the symmetrized Gauss polynomial (the $z$-binomial coefficient).  

\begin{proposition}\label{bino} (i) For $2\le m\le p-1$ and $0\le i \le p-m$, 
$$
\FPdim(S^{i}L_{m}) = \binom{i + m-1}{m-1}_{q},
$$

(ii) One has $\SFPdim(S^iL_m)=(-1)^{i(m-1)}\FPdim(S^iL_m)$. 
\end{proposition}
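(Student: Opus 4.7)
The plan is to reduce both parts to the classical principal specialization identity for the complete symmetric polynomial $h_i$ by realizing $L_m$ as an $SL_2$-Weyl module. Specifically, I would use the equivalence of $\Ver_p$ with the semisimple quotient of the category of tilting modules for $SL_2$ over $\kk$, as invoked in the proof of Lemma \ref{autover}. Under this symmetric tensor equivalence, $L_m$ (for $2 \le m \le p-1$) is the image of the Weyl module $V(m-1)$, which is simple and tilting in this range; correspondingly, $S^i L_m$ is the image of $S^i V(m-1)$ under the (symmetric tensor) semisimplification functor.

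For (i), I would argue that the character $\chi_1 = \FPdim$ on ${\rm Gr}(\Ver_p)$ pulls back, along the surjection from the Grothendieck ring of tilting modules, to the evaluation map $T \mapsto \ch(T)|_{t = q}$ on formal characters. Applying this to $S^i V(m-1)$, whose character is the complete symmetric polynomial $h_i$ in the weights $t^{m-1}, t^{m-3}, \ldots, t^{-(m-1)}$ of $V(m-1)$, the classical principal specialization identity delivers
\[
h_i(t^{m-1}, t^{m-3}, \ldots, t^{-(m-1)}) = \binom{m+i-1}{m-1}_t,
\]
and specializing $t = q$ yields (i). For (ii), I would use Lemma \ref{character}(ii) to replace $\FPdim$ by $\SFPdim = \chi_{p-1}$ and rerun the same argument with $t = q^{p-1}$, producing $\SFPdim(S^i L_m) = \binom{m+i-1}{m-1}_{q^{p-1}}$. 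Using $q^p = -1$, hence $q^{p-1} = -q^{-1}$, a short computation gives $[n]_{q^{p-1}} = (-1)^{n-1}[n]_q$; tracking the signs through the ratio defining the symmetrized Gauss polynomial produces the overall factor $(-1)^{i(m-1)}$ relative to $\binom{m+i-1}{m-1}_q$, which is (ii).

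The main obstacle is verifying that the evaluation map $T \mapsto \ch(T)|_{t = q}$ on tilting modules descends to the semisimple quotient, equivalently that $\ch(T(n))|_{t = q} = 0$ for every $n \ge p-1$. The case $n = p-1$ is the Steinberg identity $[p]_q = 0$; for $p \le n \le 2p-2$ it follows from the $SL_2$ tilting character formula $\ch(T(n)) = [n+1]_t + [2p-1-n]_t$ combined with $q^p = -1$; for larger $n$ one invokes the $SL_2$ tilting tensor product theorem (Donkin) and Frobenius twists to reduce to the base case. The analogous vanishing at $t = q^{p-1}$ needed for (ii) goes through by an identical argument, since $(q^{p-1})^p = 1$ still forces $[p]_{q^{p-1}} = 0$ and the higher-weight cancellations persist.
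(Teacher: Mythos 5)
Your argument is correct, but it takes a genuinely different route from the paper. The paper proves (i) by passing to the higher-rank Verlinde category $\Ver_p(SL_m)$: there the tautological object $V=V_{\omega_1}$ satisfies $S^iV=V_{i\omega_1}$ for $i\le p-m$, its $\FPdim$ is the quantum Weyl dimension $\binom{i+m-1}{m-1}_q$, and the Verlinde fiber functor $\Ver_p(SL_m)\to \Ver_p$ sends $V$ to $L_m$ and commutes with $S^i$; part (ii) is then a one-line parity observation, namely that $S^iL_m\subset L_m^{\otimes i}$ lies in $\Ver_p^+$ or $\Ver_p^-$ according to whether $i(m-1)$ is even or odd. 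You instead stay inside $SL_2$, realize $\Ver_p$ as the semisimplified tilting category, and show that $\chi_1$ and $\chi_{p-1}$ are induced by evaluating formal characters at $t=q$ and $t=q^{p-1}$, which requires exactly the vanishing of $\ch T(n)$ at these points for $n\ge p-1$ that you verify via the Steinberg case, the formula $\ch T(n)=[n+1]_t+[2p-1-n]_t$ for $p\le n\le 2p-2$, and Donkin's tensor product theorem; the principal specialization of $h_i$ then gives (i), and the identity $[n]_{q^{p-1}}=(-1)^{n-1}[n]_q$ gives (ii) with the correct sign $(-1)^{(i+m)(m-1)}=(-1)^{i(m-1)}$. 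The trade-off: the paper's proof outsources the combinatorics to the $q$-Weyl dimension formula and gets (ii) for free from the $\mathbb{Z}/2$-grading, while yours is self-contained at the level of $SL_2$ characters and makes explicit why $\FPdim$ and $\SFPdim$ are root-of-unity specializations of characters, at the cost of needing the structure of negligible tiltings. One step you should justify rather than assert: that semisimplification takes $S^iV(m-1)$ to $S^iL_m$ (and that $S^iV(m-1)$ is even a tilting module). This is fine precisely because $i\le p-m<p$, so $i!$ is invertible in $\kk$, $S^i$ is the image of the symmetrizer idempotent in $V(m-1)^{\otimes i}$ (hence a tilting direct summand), and any additive symmetric monoidal functor between Karoubian categories preserves images of idempotents; without the restriction $i<p$ this commutation would be a genuine issue, so it is worth stating explicitly. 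You should also note, when pulling $\chi_1$ back, that it suffices to check agreement on the basis of the split Grothendieck ring given by the $[T(n)]$, since the kernel of the map to $\mathrm{Gr}(\Ver_p)$ is spanned by the negligible classes $[T(n)]$, $n\ge p-1$.
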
 

\begin{proof} (i) Let ${\rm Ver}_p(SL_m)$ be the Verlinde category attached to the group $SL_m$ (see \cite{O}, 4.3.2, 4.3.3 and references therein). The $\Bbb Z_+$-basis of ${\rm Gr}({\rm Ver}_p(SL_m))$ is $\lbrace{V_\lambda\rbrace}$, where $\lambda$ runs through dominant integral weights for $SL_m$ such that $(\lambda+\rho,\theta)<p$, and we have a character of this ring given by 
\begin{equation}\label{qdim}
V_\lambda\mapsto \dim_q V_\lambda=\prod_{\alpha>0} \frac{[(\lambda+\rho,\alpha)]_q}{[(\rho,\alpha)]_q},
\end{equation} 
where $\alpha$ runs through positive roots, $\theta$ is the maximal root, and $\rho$ the half-sum of the positive roots (the $q$-deformed Weyl dimension formula, see \cite{BK}, 3.3). It is easy to see that all these values are positive.

Let $V=V_{\omega_1}$, where $\omega_1$ is the first fundamental weight, i.e., $V$ is the tautological object. Then $S^iV=V_{i\omega_1}$ for $i\le p-m$. 
So, by \eqref{qdim}, $\dim_q S^iV=\binom{i + m-1}{m-1}_{q}$. Hence 
$\FPdim(S^iV)= \binom{i + m-1}{m-1}_{q}$.  

We have the Verlinde fiber functor $F: {\rm Ver}_p(SL_{m})\to {\rm Ver}_p(SL_2)$, and $F(V)=L_m$. Hence $F(S^iV)=S^iL_m$, and the statement follows. 

(ii) It is clear that $S^iL_m$ is in ${\rm Ver}_p^+$ if $i(m-1)$ is even and is in ${\rm Ver}_p^-$ otherwise, which 
implies the statement. 
\end{proof} 

Let $S^iL_m=\oplus_{r=1}^{p-1} a_rL_r$.

\begin{corollary}  
If $i(m-1)-r$ is odd, then 
$$
a_{r} = \frac{1}{p}{\rm Tr}\biggl((q^{-r}-q^r)(q - q^{-1})\binom{i+m-1}{m-1}_q\biggr),
$$
and if $i(m-1)-r$ is even, then $a_r=0$.
In particular,  
\begin{equation}\label{bf}
\dim_\kk (S^iL_m)^{\rm inv}=\frac{1}{p}{\rm Tr}\biggl(|q - q^{-1}|^2\binom{i+m-1}{m-1}_q\biggr).
\end{equation}
\end{corollary}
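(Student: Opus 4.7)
The plan is to apply Theorem~\ref{decomposition} to $X = S^{i}L_{m}$ after substituting the explicit Frobenius-Perron and super Frobenius-Perron dimensions supplied by Proposition~\ref{bino}. First I would use Proposition~\ref{bino}(ii) to write $\SFPdim(S^{i}L_{m}) = (-1)^{i(m-1)}\FPdim(S^{i}L_{m})$, so that the combination $\FPdim(X) - (-1)^{r}\SFPdim(X)$ appearing in Theorem~\ref{decomposition} collapses to
$$
\bigl(1 - (-1)^{\,r + i(m-1)}\bigr)\binom{i+m-1}{m-1}_{q}.
$$
This equals $2\binom{i+m-1}{m-1}_{q}$ when $i(m-1) - r$ is odd and vanishes otherwise; feeding this into Theorem~\ref{decomposition} immediately produces both cases of the formula for $a_{r}$, with the prefactor $\tfrac{1}{2p}$ combining with the factor $2$ to yield $\tfrac{1}{p}$.

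Next I would specialize to $r = 1$ to recover \eqref{bf}. The multiplicity of $L_{1}$ in $S^{i}L_{m}$ is precisely $\dim_{\kk}(S^{i}L_{m})^{\rm inv}$, and since $q = e^{\pi i/p}$ makes $q - q^{-1}$ purely imaginary, one has
$$
(q^{-1}-q)(q-q^{-1}) \;=\; -(q-q^{-1})^{2} \;=\; |q-q^{-1}|^{2},
$$
which converts the $r = 1$ case of the previous formula into \eqref{bf}.

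I do not expect any genuine obstacle: the argument is a one-step substitution of Proposition~\ref{bino} into Theorem~\ref{decomposition}. The only book-keeping required is tracking the parity of $i(m-1) - r$ (which determines when the trace is of a nonzero quantity) and recognizing the complex-conjugate identity $(q^{-1}-q)(q-q^{-1}) = |q-q^{-1}|^{2}$.
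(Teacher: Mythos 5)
Your argument is correct and coincides with the paper's own one-line proof: substitute Proposition \ref{bino} into Theorem \ref{decomposition}, with the parity of $i(m-1)-r$ deciding between the factor $2$ and the factor $0$ in $\FPdim(X)-(-1)^r\SFPdim(X)$, and use $(q^{-1}-q)(q-q^{-1})=|q-q^{-1}|^2$ at $r=1$. The only caveat --- shared with the paper's statement, not introduced by you --- is that the $r=1$ specialization yields \eqref{bf} only when $i(m-1)$ is even; when $i(m-1)$ is odd the invariants vanish while the displayed trace instead computes $a_{p-1}$ (e.g.\ $S^3L_2=L_4$ for $p=5$ gives $0$ on the left and $1$ on the right), so that case must be read with this parity restriction understood.
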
 

\begin{proof} This follows from Proposition \ref{bino} and Theorem \ref{decomposition}. 
\end{proof} 

These formulas can be written more combinatorially using that for any Laurent polynomial 
$f(z)$ with integer coefficients such that $f(z)=f(z^{-1})$, we have 
$$
2{\rm Tr} f(q)=\tau(f), 
$$
where 
$$
\tau(\sum_{j\in \Bbb Z} b_jz^j)=p\sum_{j\in \Bbb Z}(-1)^jb_{pj}-\sum_{j\in \Bbb Z}(-1)^jb_j=
p\sum_{j\in \Bbb Z}(-1)^jb_{pj}-f(-1), 
$$
which follows from \eqref{tracefor}. For instance, from \eqref{bf} we have 
\begin{equation}\label{bf1}
\dim_\kk (S^iL_m)^{\rm inv}=\sum_{j\in \Bbb Z}(-1)^jb_{pj},
\end{equation}
where 
$$
\sum_{j\in \Bbb Z} b_jz^j=-\frac{1}{2}(z-z^{-1})^2\binom{i+m-1}{m-1}_z.
$$
More generally, we have 
$$
a_r=\sum_{j\in \Bbb Z}(-1)^jb_{pj,r},
$$
where 
$$
\sum_{j\in \Bbb Z} b_{j,r}z^j=\frac{1}{2}(z^{-r}-z^r)(z-z^{-1})\binom{i+m-1}{m-1}_z.
$$
\begin{remark} Since ${\rm Ver}_p={\rm Ver}_p(SL_2)$, formula \eqref{bf1} may be viewed as a ``fusion'' analog 
of the classical Cayley-Sylvester formula for the number $N(i,m)$ of linearly independent invariants of degree $i$ of a binary form of degree $m$, 
which says that $N(i,m)=b_0$ (see \cite{S}). Indeed, this formula is recovered from \eqref{bf1} as $p\to \infty$.   
\end{remark}

\begin{remark} Let $G$ be a simply connected simple algebraic group over $\kk$ (for simplicity assumed of type ADE), $h$ the Coxeter number of $G$, and $p>h$. Let ${\rm Ver}_p(G)$ be the Verlinde category 
corresponding to $G$ (see \cite{O}, 4.3.2, 4.3.3). The simple objects of ${\rm Ver}_p(G)$ are highest weight modules $V_\lambda$, where $(\lambda+\rho,\theta)<p$. Using a similar method to the above, we can compute the decomposition of $F(V_\lambda)$. Namely, similarly to the $SL_m$ case, 
$\FPdim(V_\lambda)=\dim_q(V_\lambda)$, given by formula \eqref{qdim} (this is a character and all its values 
are positive). Also, for each $\lambda$, $F(V_\lambda)$ is in ${\rm Ver}_p^+$ or ${\rm Ver}_p^-$ 
depending on whether the element $-1=\exp(2\pi i h_\rho)\in SL_2^{\rm principal}\subset G$ 
acts by $1$ or $-1$ on $V_\lambda$ in characteristic zero (see \cite{O}, 4.3.3). In other words, 
we have $\SFPdim(V_\lambda)=(-1)^{(\lambda,\rho)}\FPdim(V_\lambda)$. 
Thus we get $F(V_\lambda)=\sum_r a_rL_r$, where 
$$
a_r=\frac{1}{p}\Tr\left((q^{-r}-q^r)(q-q^{-1})\prod_{\alpha>0} \frac{[(\lambda+\rho,\alpha)]_q}{[(\rho,\alpha)]_q}\right)
$$
\end{remark} 

\section{$p$-adic dimensions in a fusion category} 

The following proposition relates the $p$-adic dimensions with the super Frobenius Perron dimension. 

\begin{proposition} One has 
$$
\SFPdim(X)=(\Dim_+\boxtimes \FPdim)({\rm Fr}^n(X))=(\Dim_-\boxtimes \FPdim)({\rm Fr}^n(X)).
$$
where $n$ is such that ${\rm Fr}^n: \C\to {\mathcal D}\boxtimes {\rm Ver}_p^+$, with ${\mathcal D}\subset \C^{(n)}$ nondegenerate (see Subsection \ref{vff}). 
\end{proposition}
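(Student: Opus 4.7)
The plan is to unwind the construction of $F$ recalled in Subsection \ref{vff} and match $\SFPdim(F(X))$ with the two right-hand sides term by term. Writing ${\rm Fr}^n(X)=\sum_i Y_i\boxtimes Z_i$ inside $\mathcal{D}\boxtimes{\rm Ver}_p^+$, the construction of $F$ gives
$$
F(X)\;=\;\otimes\bigl((\Phi\boxtimes\Id)({\rm Fr}^n(X))\bigr)\;=\;\sum_i \Phi(Y_i)\otimes Z_i,
$$
where each $\Phi(Y_i)\in{\rm sVec}$ is embedded into ${\rm Ver}_p$ via the subcategory generated by $L_1$ and $L_{p-1}$.

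First I would exploit the splitting ${\rm Ver}_p\cong{\rm Ver}_p^+\boxtimes{\rm sVec}$: decomposing $\Phi(Y_i)=\Phi(Y_i)_0\oplus\Phi(Y_i)_1$, the summand $\Phi(Y_i)_0\otimes Z_i$ lies in ${\rm Ver}_p^+$ while $\Phi(Y_i)_1\otimes Z_i$ lies in $L_{p-1}\otimes{\rm Ver}_p^+={\rm Ver}_p^-$. By Lemma \ref{character}(ii), $\SFPdim$ coincides with $\FPdim$ on ${\rm Ver}_p^+$ and with $-\FPdim$ on ${\rm Ver}_p^-$, and since $\SFPdim(X)=\SFPdim(F(X))$, this yields
$$
\SFPdim(X)\;=\;\sum_i \sdim(\Phi(Y_i))\,\FPdim(Z_i).
$$

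Next I would show that $\sdim(\Phi(Y))=\Dim_+(Y)=\Dim_-(Y)$ for every $Y\in\mathcal{D}$. A direct expansion in ${\rm sVec}$ using $S^j(V_0\oplus V_1)=\bigoplus_{i+k=j}S^iV_0\otimes\wedge^kV_1$ gives
$$
\sum_j \sdim(S^jV)\,z^j=(1-z)^{-\sdim V},\qquad \sum_j \sdim(\wedge^jV)\,z^j=(1+z)^{\sdim V},
$$
so by uniqueness of the $p$-adic dimension, $\Dim_\pm(V)=\sdim(V)$ on ${\rm sVec}$. On the other hand, any symmetric tensor functor $\Phi$ commutes with $S^j$ and $\wedge^j$ and preserves categorical dimensions in $\mathbb{F}_p$, hence preserves the generating series defining $\Dim_\pm$; uniqueness then yields $\Dim_\pm(Y)=\Dim_\pm(\Phi(Y))=\sdim(\Phi(Y))$.

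Substituting back,
$$
\SFPdim(X)\;=\;\sum_i \Dim_\pm(Y_i)\,\FPdim(Z_i)\;=\;(\Dim_\pm\boxtimes\FPdim)({\rm Fr}^n(X)),
$$
proving both equalities simultaneously. The only real subtlety I expect is the tensor-functorial invariance of $\Dim_\pm$, but this reduces to the uniqueness part of the EHO theorem combined with the commutation of $\Phi$ with $S^j$, $\wedge^j$ and its preservation of categorical dimensions; everything else is bookkeeping about the splitting ${\rm Ver}_p={\rm Ver}_p^+\oplus{\rm Ver}_p^-$.
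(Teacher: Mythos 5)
Your proposal is correct and is essentially the paper's own argument, just written out in full: the paper likewise reduces everything to the fact that the nondegenerate category $\mathcal{D}$ admits a super fiber functor $\Phi$, so that both $p$-adic dimensions on $\mathcal{D}$ coincide with the superdimension (via invariance of $\Dim_\pm$ under symmetric tensor functors and the computation in ${\rm sVec}$), and then concludes by the same bookkeeping with $F=\otimes\circ(\Phi\boxtimes\Id)\circ{\rm Fr}^n$ and the splitting ${\rm Ver}_p={\rm Ver}_p^+\oplus{\rm Ver}_p^-$. The only detail left implicit (in both your write-up and the paper) is the compatibility of the digit-wise definition of $(1\pm z)^{d}$ with ordinary integer exponents, which is standard.
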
 

\begin{proof} Since ${\mathcal{D}}$ is a nondegenerate fusion category, it  
admits a super fiber functor, and thus both $p$-adic dimensions in ${\mathcal{D}}$ coincide with the usual superdimension.
This implies the statement. 
\end{proof} 

\begin{theorem}\label{decomp} (i) $\Dim_+(L_r)$ equals $1$ if $r=1$ and $r-p$ if $r>1$. 
$\Dim_-(L_r)$ equals $r$ if $r<p-1$, and $-1$ if $r=p-1$. 

(ii) The $p$-adic dimensions of any object $X$ of a fusion category are as follows: if 
$F(X)=\sum_{r=1}^{p-1} a_rL_r$ then
$$
\Dim_+(X)=a_1+\sum_{i>1}(i-p)a_i,\ \Dim_-(X)=\sum_{i<p-1} ia_i-a_{p-1}.
$$
\end{theorem}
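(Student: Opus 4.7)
My plan is to prove part (i) directly, then derive part (ii) as an immediate consequence, using that the Verlinde fiber functor $F$ is symmetric monoidal and so preserves symmetric powers, exterior powers, and categorical dimensions. I assume $p>2$, as the case $p=2$ is trivial ($L_1=\be$ is the only simple).

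For part (i), the case $r=1$ is immediate since $L_1=\be$: the generating function $\sum_j\dim(S^jL_1)z^j=\sum_jz^j=(1-z)^{-1}$, and $\sum_j\dim(\wedge^jL_1)z^j=1+z=(1+z)^1$. For $r>1$, Proposition \ref{prop31} shows that $\sum_j\dim(S^jL_r)z^j$ is a polynomial of degree at most $p-r$ in $\Bbb F_p[z]$. To identify it, I invoke Proposition \ref{bino}(i), which gives $\FPdim(S^jL_r)=\binom{j+r-1}{r-1}_q$; writing $S^jL_r=\bigoplus_ka_{jk}L_k$ this reads $\sum_ka_{jk}[k]_q=\binom{j+r-1}{r-1}_q$ in $\Bbb Z[q]$. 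Specializing $q=1$ (equivalently, reducing modulo the prime $(1-q)$ of $\Bbb Z[q]$, whose residue field is $\Bbb F_p$) sends $[k]_q\mapsto k=\dim(L_k)$, hence $\dim(S^jL_r)=\binom{j+r-1}{r-1}\pmod p$ for $0\le j\le p-r$. Using the congruence $\binom{j+r-1}{r-1}\equiv(-1)^j\binom{p-r}{j}\pmod p$ (from $\binom{-r}{j}=(-1)^j\binom{j+r-1}{j}$ together with $-r\equiv p-r$), the generating function collapses to $(1-z)^{p-r}=(1-z)^{-(r-p)}$ in $\Bbb F_p[[z]]$; by uniqueness of $\Dim_+$ we conclude $\Dim_+(L_r)=r-p$.

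For $\Dim_-(L_r)$, the identity $\wedge^jL_r=L_{p-1}^{\otimes j}\otimes S^jL_{p-r}$ from the proof of Proposition \ref{prop31}, combined with $\dim(L_{p-1})=-1$, yields $\sum_j\dim(\wedge^jL_r)z^j=\sum_j\dim(S^jL_{p-r})(-z)^j$. For $r<p-1$ (so $p-r>1$) this evaluates to $(1+z)^r$ by the preceding computation (since $\Dim_+(L_{p-r})=-r$), giving $\Dim_-(L_r)=r$; for $r=p-1$, $L_{p-r}=L_1=\be$, so the series is $\sum_j(-z)^j=(1+z)^{-1}$, giving $\Dim_-(L_{p-1})=-1$.

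Part (ii) then follows immediately. Because $F$ is a symmetric tensor functor and preserves $\dim$, $\sum_j\dim(S^jX)z^j=\sum_j\dim(S^jF(X))z^j$, and the identity $S^j(A\oplus B)=\bigoplus_{i+k=j}S^iA\otimes S^kB$ makes this generating function multiplicative under direct sums. For $F(X)=\bigoplus_rL_r^{\oplus a_r}$ we therefore get
$$(1-z)^{-\Dim_+(X)}=\prod_r(1-z)^{-a_r\Dim_+(L_r)}=(1-z)^{-\sum_ra_r\Dim_+(L_r)},$$
and substituting part (i) gives the claimed formula; the analogous argument with exterior powers handles $\Dim_-$. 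The only delicate step in the whole proof is the identification $\dim(S^jL_r)=\binom{j+r-1}{r-1}\pmod p$ via the $q=1$ specialization, which rests on the fact that the categorical dimension character $\Gr({\rm Ver}_p)\to\Bbb F_p$ coincides with $\FPdim$ reduced modulo $(1-q)$ (both send $L_2\mapsto 2$); everything else is routine manipulation of generating functions.
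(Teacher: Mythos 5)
Your overall architecture (compute the dimension generating functions of $L_r$ explicitly, then get (ii) by additivity) could work, and your handling of $r=1$, of $\Dim_-$ via $\wedge^jL_r=L_{p-1}^{\otimes j}\otimes S^jL_{p-r}$, and of part (ii) is fine. But the step you yourself flag as the delicate one is genuinely broken. You deduce $\dim(S^jL_r)\equiv\binom{j+r-1}{r-1}\pmod p$ from Proposition \ref{bino}(i) by ``specializing $q=1$'', i.e.\ reducing $\sum_k a_{jk}[k]_q=\binom{j+r-1}{r-1}_q$ modulo ``the prime $(1-q)$ of $\Bbb Z[q]$ with residue field $\Bbb F_p$''. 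No such prime exists: since $q$ is a primitive $2p$-th root of unity with $p$ odd, the norm of $1-q$ is $\Phi_{2p}(1)=\Phi_p(-1)=1$, so $1-q$ is a unit in $\Bbb Z[q]$ and there is no ring homomorphism $\Bbb Z[q]\to\Bbb F_p$ sending $q\mapsto 1$. The unique prime of $\Bbb Z[q]$ above $p$ is $(1+q)$, and reduction modulo it sends $[k]_q\mapsto(-1)^{k-1}k$, not $k$. More fundamentally, no argument using only $\FPdim(S^jL_r)$ can recover $\dim(S^jL_r)$: the categorical dimension does not factor through $\FPdim$, e.g.\ in ${\rm Ver}_5$ one has $\FPdim(L_2)=\FPdim(L_3)=\tfrac{1+\sqrt5}{2}$ while $\dim L_2=2\neq 3=\dim L_3$ in $\Bbb F_5$. (What is true is that $\SFPdim=\chi_{p-1}$ reduces to $\dim$ modulo $(1+q)$, so your route could be patched using Proposition \ref{bino}(ii) and evaluating the Gauss binomials at $q=-1$, at the cost of extra bookkeeping.) The congruence you assert is in fact correct --- it is equivalent to the theorem being proved --- but as written it is not established, so the identification of the generating function with $(1-z)^{p-r}$ is unsupported.

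The paper's proof avoids computing any $\dim(S^jL_r)$ at all, and this is the cheapest repair of your argument. It uses only two inputs: your degree bound from Proposition \ref{prop31}, which shows that $(1-z)^{-\Dim_+(L_r)}$ is a polynomial of degree at most $p-r$, hence that $-\Dim_+(L_r)$ is an ordinary integer in $[0,p-r]$; and the fact from \cite{EHO} (recalled in Section 2.3) that the $p$-adic dimensions reduce modulo $p$ to the categorical dimension, together with $\dim L_r=r$ in $\Bbb F_p$. The only integer in $[r-p,0]$ congruent to $r$ mod $p$ is $r-p$, which pins down $\Dim_+(L_r)$ for $r>1$; the same argument with $(1+z)^{\Dim_-(L_r)}$ gives $\Dim_-(L_r)=r$ for $r<p-1$, the invertible cases $L_1,L_{p-1}$ being immediate. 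Part (ii) then follows by the additivity argument exactly as in your final paragraph. So: keep your degree bounds and your part (ii), but replace the $q=1$ specialization by the congruence $\Dim_\pm(L_r)\equiv\dim(L_r)=r\pmod p$.
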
 

\begin{proof} (i) Let $r>1$. By Proposition \ref{prop31}, $(1-z)^{-\Dim_+(L_r)}$ is a polynomial 
of degree $\le p-r$. Thus, $-\Dim_+(L_r)$ is an integer between $0$ and $p-r$. Since 
$\Dim_+(L_r)$ must equal $\dim L_r=r$ modulo $p$, we conclude that $\Dim_+(L_r)=r-p$. 

Let $r<p-1$. By Proposition \ref{prop31}, $(1+z)^{\Dim_-(L_r)}$ is a polynomial 
of degree $\le r$. Thus, $\Dim_-(L_r)$ is an integer between $0$ and $r$. Since 
$\Dim_-(L_r)$ must equal $\dim L_r=r$ modulo $p$, we conclude that $\Dim_-(L_r)=r$. 

The remaining cases $r=1,p-1$ are easy, as the corresponding objects are invertible. 

(ii) is immediate from (i).    
\end{proof} 

\begin{corollary} 
One has 
$$
{\rm length}F(X)={\rm Trd}_+(X)+{\rm Trd}_-(X)+\frac{\Dim_-(X)-\Dim_+(X)}{p}.
$$
\end{corollary}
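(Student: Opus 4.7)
The plan is a direct bookkeeping argument, expressing all four quantities in the claimed identity in terms of the multiplicities $a_r$ in the decomposition $F(X)=\sum_{r=1}^{p-1}a_rL_r$, and then verifying the resulting identity between integers.

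First I would observe that since ${\rm Ver}_p$ is semisimple with simple objects $L_1,\dots,L_{p-1}$, one has ${\rm length}\,F(X)=\sum_{r=1}^{p-1}a_r$. Next, Proposition \ref{tradeg}(i) gives immediately
$$
{\rm Trd}_+(X)=a_1,\qquad {\rm Trd}_-(X)=a_{p-1}.
$$
Applying Theorem \ref{decomp}(ii), I would write
$$
\Dim_+(X)=a_1+\sum_{i=2}^{p-1}(i-p)a_i,\qquad \Dim_-(X)=\sum_{i=1}^{p-2}ia_i-a_{p-1}.
$$

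The key step is to compute the difference $\Dim_-(X)-\Dim_+(X)$. The coefficient of $a_1$ in $\Dim_-(X)$ is $1$, matching the coefficient of $a_1$ in $\Dim_+(X)$, so these cancel; similarly the coefficient of $a_{p-1}$ in $\Dim_-(X)$ is $-1$, which equals the coefficient $(p-1)-p=-1$ of $a_{p-1}$ in $\Dim_+(X)$, so these also cancel. For $2\le i\le p-2$ the coefficient of $a_i$ in the difference is $i-(i-p)=p$. Therefore
$$
\Dim_-(X)-\Dim_+(X)=p\sum_{i=2}^{p-2}a_i,
$$
and dividing by $p$ gives $\sum_{i=2}^{p-2}a_i$.

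Adding this to ${\rm Trd}_+(X)+{\rm Trd}_-(X)=a_1+a_{p-1}$ recovers $\sum_{r=1}^{p-1}a_r={\rm length}\,F(X)$, as desired. There is no real obstacle; the whole argument is a short calculation once the explicit formulas of Proposition \ref{tradeg} and Theorem \ref{decomp} are assembled.
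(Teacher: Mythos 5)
Your proof is correct and is essentially the paper's own argument: the paper likewise deduces the identity from Theorem \ref{decomp}(ii) together with ${\rm length}\,F(X)=\sum_r a_r$ and Proposition \ref{tradeg}(i), merely leaving the coefficient bookkeeping implicit. Your explicit cancellation of the $a_1$ and $a_{p-1}$ terms and the factor of $p$ for $2\le i\le p-2$ is exactly the computation the paper omits.
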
 

\begin{proof} This follows from Theorem \ref{decomp}(ii), using that 
${\rm length}F(X)=\sum_i a_i$.  
\end{proof} 

\section{Classification of triangular semisimple Hopf algebras in positive characteristic} 

The following theorem classifies traingular semisimple Hopf algebras in any characteristic, generalizing  
Theorem 2.2 of \cite{EG} in characteristic zero. 

For a finite group scheme $G$ over $\kk$, let $\kk G$ denote the dual Hopf algebra 
$O(G)^*$ to the function algebra $O(G)$ of $G$. 

\begin{theorem} \label{classif} Let $H$ be a finite dimensional triangular semisimple Hopf algebra over a field $\kk$ of any characteristic (see \cite{EGNO}, Chapter 5). Then there exists a semisimple finite group scheme $G$ over $\kk$, a central element $\varepsilon\in G(\kk)$ of order $\le 2$ (with $\varepsilon=1$ if $p=2$) and a twist $J\in \kk G\otimes \kk G$
such that $H=\kk G^J$ as a Hopf algebra, and the universal $R$-matrix of $H$ 
is $R=J_{21}^{-1}R_0J$, where $R_0=1\otimes 1$ if $p=2$ and 
$R_0=\frac{1}{2}(1\otimes 1+1\otimes \varepsilon+\varepsilon\otimes 1-\varepsilon\otimes \varepsilon)$ otherwise. 
\end{theorem}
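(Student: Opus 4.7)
The plan is to apply the Verlinde fiber functor to $\mathcal{C} := \Rep(H)$, show that it factors through $\sVec$, and then reconstruct $H$ by a super-Tannakian argument combined with a twist identification in the spirit of \cite{EG}.

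First I would observe that $\Rep(H)$ comes with the Hopf-algebraic forgetful tensor functor $\omega : \Rep(H)\to \Vec$ (which ignores the braiding coming from $R$), so $\FPdim$ on $\mathcal{C}$ coincides with the vector-space dimension and takes nonnegative integer values. Using the formula \eqref{equa}, $\SFPdim(X) = g(\FPdim(\Psi^2 X))$, together with the fact that $\FPdim(\Psi^2 X) = \dim S^2 X - \dim\wedge^2 X \in \mathbb{Z}$ and that the Galois automorphism $g$ fixes $\mathbb{Z}$, this also forces $\SFPdim$ to be integer-valued on $\mathcal{C}$.

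Let $F : \mathcal{C} \to {\rm Ver}_p$ be the Verlinde fiber functor from Theorem \ref{fiber}; when $p=2$ we have ${\rm Ver}_p = \Vec$ and the factorization step below is vacuous, so assume $p>2$. Writing $F(X) = \bigoplus_{r=1}^{p-1} a_r L_r$, Theorem \ref{decomposition} expresses each $a_r$ as a rational multiple of $\Tr((q^{-r}-q^r)(q-q^{-1}))\,(\FPdim(X) - (-1)^r \SFPdim(X))$. Expanding $(q^{-r}-q^r)(q-q^{-1}) = (q^{r-1}+q^{1-r}) - (q^{r+1}+q^{-r-1})$ and applying \eqref{tracefor}, the trace vanishes for $2 \le r \le p-2$ (both summands have trace $(-1)^r$) and equals $p$ for $r \in \{1,p-1\}$ (where the boundary contributions $q^0+q^0 = 2$ and $q^p + q^{-p} = -2$ appear). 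Combined with the integrality of $\FPdim$ and $\SFPdim$ just established, this forces $a_r = 0$ for $2 \le r \le p-2$, so $F$ lands in the subcategory $\sVec \subset {\rm Ver}_p$ generated by $L_1$ and $L_{p-1}$ and in fact becomes a super-fiber functor $F : \mathcal{C} \to \sVec$.

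The super-Tannakian reconstruction theorem — a Hopf-algebraic statement valid in any characteristic — then identifies $\mathcal{C}$ with $\Rep(G,\varepsilon)$ for some affine super group scheme $(G,\varepsilon)$, with $\varepsilon \in G(\kk)$ central of order at most $2$ acting on the image of $F$ as the parity operator; finiteness and semisimplicity of $\mathcal{C}$ force $G$ to be a semisimple (linearly reductive) finite group scheme over $\kk$, and one takes $\varepsilon = 1$ if $p=2$. Next, I would compare two tensor fiber functors on $\mathcal{C} \simeq \Rep(G,\varepsilon)$ with values in $\Vec$ — the Hopf-algebraic forgetful $\omega$ on the one hand, and the composite of the super-forgetful functor $\Rep(G,\varepsilon) \to \sVec$ with $\sVec\to\Vec$ on the other — and invoke the standard theory of bialgebra twists to produce a twist $J \in \kk G \otimes \kk G$ such that $H \cong (\kk G)^J$ as Hopf algebras. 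Under this identification the $R$-matrix of $H$ transports to $R = J_{21}^{-1} R_0 J$, where $R_0 = \tfrac12 (1\otimes 1 + 1\otimes \varepsilon + \varepsilon \otimes 1 - \varepsilon \otimes \varepsilon)$ for $p>2$ and $R_0 = 1\otimes 1$ for $p=2$. The main obstacle I expect is the trace-vanishing step showing $F$ factors through $\sVec$, which hinges on the explicit formula of Theorem \ref{decomposition} and careful handling of the boundary indices $r=1, p-1$; once this is in hand the remaining super-Tannakian and bialgebra-twist arguments are essentially formal and parallel the characteristic-zero proof in \cite{EG}.
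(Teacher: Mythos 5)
Your reduction to characteristic $p>0$ and the factorization of $F$ through $\sVec\subset \Ver_p$ are fine: the trace computation you sketch (integrality of $\FPdim$ and, via \eqref{equa}, of $\SFPdim$, killing $a_r$ for $2\le r\le p-2$) is correct, though heavier than needed --- the paper simply observes that $\Rep H$ is integral and $\sVec$ is the largest integral fusion subcategory of $\Ver_p$, since $\FPdim(L_r)=[r]_q\notin\Bbb Z$ for $2\le r\le p-2$. The twist-comparison step at the end (two fiber functors to $\Vec$, hence $H\cong \kk G^J$ with $R=J_{21}^{-1}R_0J$) is also the standard argument and matches the paper's citation of \cite{O}, 1.3, and \cite{EG}.

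The genuine gap is the sentence ``finiteness and semisimplicity of $\mathcal C$ force $G$ to be a semisimple (linearly reductive) finite \emph{group scheme}.'' Super-Tannakian reconstruction from a fiber functor to $\sVec$ only yields a finite \emph{supergroup} scheme $(G,\varepsilon)$, and proving that its odd part vanishes is precisely the nontrivial content of the theorem in characteristic $p$ --- it occupies the bulk of the paper's proof (alternatively one must cite Masuoka \cite{Ma}, Corollary 43 and Theorem 45). It is not formal: one must show that $\g_-=0$ for $\g=\Lie(G)$. The paper does this by first proving that $\Rep(M,\varepsilon)$ is semisimple for every subsupergroup scheme $M\ni\varepsilon$ (via surjectivity of restriction and exactness of module categories); this rules out odd $x$ with $[x,x]=0$ (the subgroup $\exp(\kk x)$ would have non-semisimple representation category), but odd directions with $[x,x]\ne 0$ require more: Nagata's theorem to get $G_+^0$ abelian, the character argument $\chi^2=1\Rightarrow\chi=1$ (since $\chi$ has $p$-power order) showing $G_+^0$ acts trivially on $\g_-$, and then the central extension $1\to G_+^0\to G^0\to\exp(\g_-)\to 1$ together with semisimplicity of $\Rep(Q,\varepsilon)$ for $Q=\langle\varepsilon,\exp(\g_-)\rangle$ to conclude $\g_-=0$. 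Without this (or an explicit appeal to \cite{Ma}), your argument only shows that $H$ is a twist of the group algebra of a finite \emph{supergroup} scheme, which is weaker than the stated classification. A minor additional point: the theorem is asserted in any characteristic, so you should also dispose of characteristic $0$ by citing \cite{EG} and of $p=2$ by citing \cite{O}, Corollary 1.6, as the paper does.
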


Note that this classification is completely explicit.
Namely, by Nagata's theorem (\cite{DG}, IV, 3.6), $G=\Gamma\ltimes A$, where 
$\Gamma$ is a finite group of order coprime to $p$ and $A$ is the dual group scheme to a finite abelian $p$-group $P$; in other words, we have $\kk G=\kk \Gamma\ltimes O(P)$. 
Moreover, the twists $J$ for $G$ are classified in terms 
of subgroup schemes of $G$ and 2-cocycles on them 
in \cite{Ge}, Proposition 6.3. Namely, a twist
corresponds to a nondegenerate class $\psi\in H^2(G',\Bbb G_m)$, where 
$G'$ is a group subscheme of $G$. It is easy to see that $G'$ is conjugate to $\Gamma'\ltimes A'$, where 
$\Gamma'\subset \Gamma$ and $A'\subset A$ is $\Gamma'$-invariant. Now the Hochschild-Serre spectral sequence
(for group schemes) implies that $\psi$ is pulled back from $\Gamma'$ (as the cohomology $H^i(A',\Bbb G_m)$ vanishes for $i>0$, so the pullback map 
$H^i(\Gamma,\kk^\times)=H^i(\Gamma,\Bbb G_m)\to H^i(G,\Bbb G_m)$ is an isomorphism). 
Since $\psi$ is nondegenerate, this implies that $A'=1$, i.e., the twist $J$ comes from $\kk \Gamma\otimes \kk \Gamma$, 
i.e. corresponds to a subgroup of central type $\Gamma'\subset \Gamma$.   

\begin{proof} We may assume that ${\rm char}\kk=p>0$. 
For $p=2$, this is immediate from \cite{O} (see 
\cite{O}, Corollary 1.6), so let us assume that $p>2$. 
The category $\Rep H$ is a symmetric fusion category. 
Let $F:\Rep H\to \Ver_p$ be the Verlinde fiber functor. 
Since $\Rep H$ is integral (i.e., has integer FP dimensions), 
$F$ lands in $\sVec\subset {\rm Ver}_p$, as this is the largest 
integral subcategory of ${\rm Ver}_p$.  
Thus, as in \cite{O}, 1.3, $\Rep H=\Rep (G,\varepsilon)$, 
the category of representations of $G$ on superspaces on which $\varepsilon$ acts by parity, where $G$ is a 
semisimple finite supergroup scheme over $\kk$ and $\varepsilon\in G(\kk)$ 
is an element of order $\le 2$ acting on $O(G)$ by the parity automorphism.   
(For a general theory of affine algebraic supergroup schemes 
we refer the reader to \cite{Ma}.) 

It remains to show that $G$ is in fact an ordinary group scheme. 
This is shown in \cite{Ma} (see Corollary 43 and Theorem 45); here  
we provide an alternative proof. We will need the following two lemmas. 

\begin{lemma}\label{l1} If $M$ is a subsupergroup scheme of $G$ containing $\varepsilon$, then $\Rep(M,\varepsilon)$ is semisimple.
\end{lemma}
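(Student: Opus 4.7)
The plan is a Maschke-style adjunction argument. The key general fact I would use is that in any finite rigid symmetric tensor category $\C$, projectivity of the unit object $\be$ implies that $\C$ is semisimple: for any $V\in\C$ one has $V\cong V\otimes\be$, and by rigidity $V\otimes(-)$ preserves projective objects via the adjunction $\Hom(V\otimes P,W)\cong\Hom(P,V^*\otimes W)$, which is exact in $W$ whenever $P$ is projective. So every object becomes projective, which forces semisimplicity. It therefore suffices to show that $\be\in\Rep(M,\varepsilon)$ is projective.

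To produce such a projective unit, I would use the restriction functor $\Res\colon \Rep(G,\varepsilon)\to \Rep(M,\varepsilon)$, which is well defined because $\varepsilon\in M$, and is an exact symmetric tensor functor. Dually to the Hopf subalgebra inclusion $\kk M\subset\kk G$, this is restriction of scalars, so it admits a right adjoint, the coinduction functor $\mathrm{Coind}\colon V\mapsto\Hom_{\kk M}(\kk G,V)$. By the super analog of the Nichols--Zoeller theorem for Hopf subalgebras of finite-dimensional Hopf algebras (due to Masuoka), $\kk G$ is free as a $\kk M$-module, and hence $\mathrm{Coind}$ is exact.

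By the standard adjunction argument, exactness of the right adjoint $\mathrm{Coind}$ implies that the left adjoint $\Res$ preserves projectives: for a projective object $P$ in $\Rep(G,\varepsilon)$,
$$\Hom_{\Rep(M,\varepsilon)}(\Res P,-)\;\cong\;\Hom_{\Rep(G,\varepsilon)}(P,\mathrm{Coind}(-))$$
is a composition of exact functors. Since $\Rep(G,\varepsilon)$ is semisimple by hypothesis, $\be$ is projective there, and hence $\Res(\be)=\be\in\Rep(M,\varepsilon)$ is projective, which completes the proof via the reduction in the first paragraph.

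The main obstacle will be invoking the correct super Nichols--Zoeller freeness of $\kk G$ over $\kk M$ for a closed subsupergroup scheme $M\subseteq G$ containing $\varepsilon$; this is the only nonformal input, and once it is in place the remainder is a short categorical manipulation.
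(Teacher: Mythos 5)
Your proof is correct in outline, but it takes a genuinely different route from the paper. The paper disposes of the lemma by quoting general theory: restriction $\Rep(G,\varepsilon)\to\Rep(M,\varepsilon)$ is a \emph{surjective} tensor functor from a fusion category to a finite tensor category (surjectivity is cheap, since every $M$-representation embeds into copies of the regular comodule $O(M)$, a quotient of $\Res O(G)$), hence $\Rep(M,\varepsilon)$ is an exact module category over $\Rep(G,\varepsilon)$ by \cite{EGNO}, Example 7.5.6, and exact module categories over fusion categories are semisimple. You instead run a hands-on Maschke-type argument: reduce semisimplicity to projectivity of $\be$, and transport projectivity along $\Res$ via exactness of the right adjoint $\Hom_{\kk M}(\kk G,-)$. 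This is sound, and it does not even need surjectivity of restriction; its cost is the nonformal input you flag, and there you should be careful with the phrasing. What you need is \emph{projectivity} of $\kk G$ over $\kk M$, not freeness, and rather than invoking a ``super Nichols--Zoeller theorem'' as a black box (the attribution is shaky), you can get projectivity from the ordinary Nichols--Zoeller theorem by bosonization: $\kk G\rtimes\kk(\Z/2\Z)$ (with $\Z/2\Z$ acting by parity) is an ordinary finite-dimensional Hopf algebra containing $\kk M\rtimes\kk(\Z/2\Z)$, it is free over it, and restricting scalars shows $\kk G\oplus\kk G$ is free, hence $\kk G$ projective, over $\kk M$; alternatively quote Masuoka's freeness/faithful-flatness results for affine supergroup schemes in \cite{Ma}, in the spirit of the paper's own reference. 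One further routine check worth recording is that the right adjoint of $\Res$ between the categories $\Rep(G,\varepsilon)$ and $\Rep(M,\varepsilon)$ (not just between all supermodule categories) is still computed by coinduction, compatibly with the requirement that $\varepsilon$ act by parity; with that and the projectivity input in place, your two formal steps (projective unit forces semisimplicity, and a left adjoint with exact right adjoint preserves projectives) are correct as stated.
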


\begin{proof} Recall that if $\C$ is a fusion category and 
$\Phi: \C\to {\mathcal{D}}$ is a surjective tensor functor (\cite{EGNO}, 6.3)
to a finite tensor category, then ${\mathcal{D}}$ is also fusion. Indeed, ${\mathcal{D}}$ is an exact $\C$-module (\cite{EGNO}, Example 7.5.6), hence
semisimple. Now take $\C=\Rep(G,\varepsilon)$ and ${\mathcal{D}}=\Rep(M,\varepsilon)$, and take $\Phi$ to be the restriction functor.   
\end{proof} 

Now let $G_+$ be the even part of $G$ (i.e., $O(G_+)=O(G)/I$, where $I$ is the ideal generated by the odd elements, which is automatically a Hopf ideal),
and $G_+^0$ its connected component of the identity. By Lemma \ref{l1}, $\Rep(G_+^0)$ is semisimple. Thus, by Nagata's theorem (\cite{DG}, IV, 3.6), $G_+^0$ is abelian.

Let ${\rm Lie}(G)=\g_+\oplus \g_-$ be the Lie algebra of $G$, decomposed into the even and odd parts. Let $G^0$ be the connected component of the identity in $G$.

\begin{lemma}\label{l2} The action of $G_+^0$ on $\g_-$ is trivial. In other words, $G_+^0$ is central in $G^0$.
\end{lemma}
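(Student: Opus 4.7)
The plan is to argue by contradiction using Lemma~\ref{l1}: assuming the adjoint action of $G_+^0$ on $\g_-$ is nontrivial, I will exhibit a subsupergroup scheme $M\subset G$ containing $\varepsilon$ whose representation category $\Rep(M,\varepsilon)$ admits a non-semisimple object.

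First I would pin down the structure of $G_+^0$. Applying Lemma~\ref{l1} to $G_+^0\cdot\langle\varepsilon\rangle$ and then restricting to $G_+^0$ shows that $\Rep G_+^0$ is semisimple, so $G_+^0$ is linearly reductive. Combined with $G_+^0$ being connected abelian (established immediately before Lemma~\ref{l2}), Nagata-type structure theory forces $G_+^0\cong\prod_i\mu_{p^{a_i}}$, a group scheme of multiplicative type. Consequently $\g_-$ decomposes as a direct sum of $G_+^0$-weight spaces $\g_-=\bigoplus_\chi(\g_-)_\chi$, indexed by characters $\chi$ of $G_+^0$.

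Under the contradiction hypothesis there is a nontrivial weight $\chi_0$ with a nonzero vector $v\in(\g_-)_{\chi_0}$. Let $M\subset G$ be the subsupergroup scheme generated by $G_+^0$, $\varepsilon$, and the odd tangent direction $v$. Since $\varepsilon\in M$, Lemma~\ref{l1} guarantees $\Rep(M,\varepsilon)$ is semisimple. I aim to contradict this by exhibiting a non-split extension. Take $V=V_0\oplus V_1$ to be the $2$-dimensional superspace with $V_0$ carrying the trivial $G_+^0$-character, $V_1$ carrying the character $\chi_0^{-1}$, $\varepsilon$ acting by parity, and $v$ acting as a nonzero odd operator $V_1\to V_0$ (and as zero on $V_0$). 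Then $V_0\subset V$ is a subobject in $\Rep(M,\varepsilon)$, while any $G_+^0$-stable complement would have to equal $V_1$, which is not stable under $v$; hence $V$ is not semisimple.

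The main obstacle is verifying that the prescription above genuinely defines an $M$-module; the delicate point is the relation $v^2=\tfrac12[v,v]$ when the bracket $[v,v]\in\g_+$ is nonzero. Here one uses that $[v,v]$ has $G_+^0$-weight $\chi_0^2$, which is nontrivial because $p>2$ and $\chi_0\ne 1$. Since $V$ contains no weight vectors of weight $\chi_0^2$ or $\chi_0$, the element $[v,v]$ must act by zero on $V$ purely for weight reasons, so both sides of the relation vanish automatically. Equivalently, one can first factor the construction through the quotient of $M$ by the normal subsupergroup scheme generated by $[v,v]$ and then pull back to $M$. Once this technicality is handled, the non-semisimple object $V$ lies in $\Rep(M,\varepsilon)$ and contradicts Lemma~\ref{l1}, proving that $G_+^0$ acts trivially on $\g_-$ and is therefore central in $G^0$.
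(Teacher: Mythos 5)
Your strategy does work and is genuinely different from the paper's. The paper argues weight by weight: for an eigenvector $x\in\g_-$ of weight $\chi$ it first shows $[x,x]\neq 0$ (otherwise the purely odd subsupergroup $N$ generated by $\varepsilon$ and $\exp(\kk x)$ would have $\Rep(N,\varepsilon)$ non-semisimple, contradicting Lemma \ref{l1}), then uses that $G_+^0$ is abelian, hence acts trivially on $\g_+\ni[x,x]$, to force $\chi^2=1$, and finally $\chi=1$ because characters of the connected group scheme $G_+^0$ have $p$-power order; semisimplicity of the $G_+^0$-action on $\g_-$ then finishes. You instead assume a nontrivial weight $\chi_0$ and exhibit an explicit non-split two-dimensional object of $\Rep(M,\varepsilon)$ for $M=\langle G_+^0,\varepsilon,v\rangle$. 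Both arguments rest on Lemma \ref{l1} together with the facts that $G_+^0$ is abelian of multiplicative type and that its character group is a $p$-group; your version makes the failure of semisimplicity completely explicit, while the paper keeps the auxiliary subgroup as small as possible.

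The soft spot is exactly the point you flag, and your justification of it is not correct as stated. The action of $[v,v]\in\g_+=\Lie(G_+^0)$ on $V$ is not a free choice to be tested against the weights occurring in $V$: it is forced to be the differential of the prescribed $G_+^0$-action, i.e.\ the scalar $d\chi_0^{-1}([v,v])$ on $V_1$ and $0$ on $V_0$; in particular it preserves the weight spaces, so the argument ``$V$ contains no vectors of weight $\chi_0^2$ or $\chi_0$'' is not the relevant one --- and that claim is literally false when $p=3$ and $\chi_0$ has order $3$, since then $\chi_0^2=\chi_0^{-1}$ is a weight of $V$. The fallback of quotienting $M$ by the subsupergroup generated by $[v,v]$ is also unjustified (normality and compatibility of the pullback are precisely what is at stake). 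The correct and immediate repair is the paper's own central observation: since $G_+^0$ is abelian, $\Ad(G_+^0)$ is trivial on $\g_+$, while $[v,v]$ is an adjoint weight vector of weight $\chi_0^2\neq 1$ (here you use $p>2$ and the $p$-power order of the characters, which your $\prod_i\mu_{p^{a_i}}$ description supplies); hence $[v,v]=0$, and the relation $2v^2=[v,v]$ holds trivially for your prescription. Equivalently, $d\rho([v,v])$ commutes with $\rho(G_+^0)$ and satisfies $\rho(g)\,d\rho([v,v])\,\rho(g)^{-1}=\chi_0^2(g)\,d\rho([v,v])$, so it vanishes because $\chi_0^2\neq1$. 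With this inserted, $V$ is a genuine object of $\Rep(M,\varepsilon)$ with the unsplit submodule $V_0$, and the contradiction with Lemma \ref{l1} goes through; note also that once you know $[v,v]=0$ you could conclude more quickly, as the paper does, by restricting to the purely odd subgroup generated by $\varepsilon$ and $\exp(\kk v)$.
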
 

\begin{proof} Let $x\in \g_-$ be an eigenvector, on which $G_+^0$ acts by a character $\chi$. 
We claim that $[x,x]\ne 0$. Indeed, assume the contrary, and let $N$ be the supergroup scheme generated by $\varepsilon$ and $\exp(\kk x)$. 
Then we have a surjective restriction functor $\Rep(G,\varepsilon)\to \Rep(N,\varepsilon)$, so $\Rep(N,\varepsilon)$ is semisimple by Lemma \ref{l1}, which is a contradiction. 

Now, $G_+^0$ acts on $[x,x]$ by $\chi^2$. But $[x,x]\in \g_+$, 
hence $\chi^2=1$, as $G_+^0$ is abelian. 

But $\chi$ has order which is a power of $p$, so $\chi=1$. Thus, Lemma \ref{l2} follows from Lemma \ref{l1} (which implies that the action of $G_+^0$ on $\g_-$ is semisimple).
\end{proof} 

We see that we have a central extension of supergroup schemes
$$
1\to G_+^0\to G^0\to \exp(\g_-)\to 1,
$$
where $\exp(\g_-)={\rm Spec}\wedge\g_-$ (purely odd abelian supergroup scheme). 
Let $P$ be the supergroup scheme generated by $\varepsilon$ and $G^0$, and $Q$ be the supergroup scheme generated by $\varepsilon$ and $\exp(\g_-)$. 
Thus the category $\Rep(Q,\varepsilon)$ is a tensor subcategory of
$\Rep(P,\varepsilon)$. But the category $\Rep(P,\varepsilon)$ is semisimple by Lemma \ref{l1}.
Hence, $\Rep(Q,\varepsilon)$ is semisimple. This can only hold if $\g_-$ is zero.
Thus, we have $G^0=G_+^0$, hence $G=G_+$, as desired. 
\end{proof}

\end{document}